\theoremstyle{plain}
\newtheorem{theorem}{Theorem}[section]
\newtheorem*{claim*}{Claim}
\newtheorem*{proposition*}{Proposition}
\newtheorem{fact}[theorem]{Fact}
\newtheorem*{fact*}{Fact}
\newtheorem{conjecture}[theorem]{Conjecture}
\newtheorem*{conjecture*}{Conjecture}
\newtheorem{lemma}[theorem]{Lemma}
\newtheorem*{lemma*}{Lemma}
\newtheorem*{question*}{Question}
\theoremstyle{definition}
\theoremstyle{definition}\newtheorem*{remark*}{Remark}
\theoremstyle{definition}\newtheorem{definition}[theorem]{Definition}
\theoremstyle{definition}\newtheorem*{definition*}{Definition}
\theoremstyle{definition}
\theoremstyle{definition}
\theoremstyle{definition}
\theoremstyle{definition}\newtheorem*{example*}{Example}
\newcommand{\term}{\textbf} 
\newcommand\seq[1]{\langle #1 \rangle} 
\renewcommand{\bar}{\overline} 
\renewcommand{\phi}{\varphi} 
\renewcommand{\epsilon}{\varepsilon} 
\DeclareMathOperator{\dom}{dom} 
\def\N{\mathbb{N}} 
\def\R{\mathbb{R}} 
\def\Cantor{2^\omega} 
\def\ZF{\mathsf{ZF}}
\def\ZFC{\mathsf{ZFC}}
\def\AD{\mathsf{AD}}
\def\Unif{\mathsf{Uniformization}}
\def\CH{\mathsf{CH}}
\def\PSP{\mathsf{PSP}}
\def\H{\mathbb{H}}
\def\c{\mathfrak{c}}
\def\LN{\mathsf{LN}}
\DeclareMathOperator{\head}{head}
\DeclareMathOperator{\tail}{tail}
\begin{document}

\author{Kojiro Higuchi}
\address{College of Engineering, Nihon University, 1 Nakagawara, Tokusada, Tamuramachi, Koriyama, 963-8642, Japan}
\email{higuchi.koujirou@nihon-u.ac.jp}

\author{Patrick Lutz}
\address{Department of Mathematics, University of California, Los Angeles}
\email{pglutz@math.ucla.edu}

\title[A Note on a Conjecture of Sacks]{A Note on a Conjecture of Sacks: It is Harder to Embed Height Three Partial Orders than Height Two Partial Orders}

\begin{abstract}
A long-standing conjecture of Sacks states that it is provable in $\ZFC$ that every locally countable partial order of size continuum embeds into the Turing degrees. We show that this holds for partial orders of height two, but provide evidence that it is hard to extend this result even to partial orders of height three. In particular, we show that the result for height two partial orders holds both in certain extensions of $\ZF$ with only limited forms of choice and in the Borel setting (where the partial orders and embeddings are required to be Borel measurable), but that the analogous result for height three partial orders fails in both of these settings. We also formulate a general obstacle to embedding partial orders into the Turing degrees, which explains why our particular proof for height two partial orders cannot be extended to height three partial orders, even in $\ZFC$. We finish by discussing how our results connect to the theory of countable Borel equivalence relations.
\end{abstract}

\maketitle
\vspace{-20pt}
\thispagestyle{empty}

\section{Introduction}

An enduring goal of computability theory is to determine which structures can be embedded into the Turing degrees. When the structures under consideration are partial orders, there are two obvious restrictions: since the set of Turing degrees has size continuum and every Turing degree has at most countably many predecessors, any partial order which embeds into the Turing degrees must also have these properties. A famous conjecture of Sacks states that these are the only restrictions.

More precisely, say that a partial order $(P, \leq_P)$ is \term{locally countable} if every element $x \in P$ has at most countably many predecessors (i.e.\ the set $\{y \in P \mid y <_P x\}$ is countable). In 1963, Sacks conjectured that every locally countable partial order of size continuum can be embedded into the Turing degrees \cite{sacks1963degrees}. Sacks himself proved that this holds in $\ZFC + \CH$ (by showing that it holds in $\ZFC$ for all locally countable partial orders of size $\omega_1$), but whether it is provable in $\ZFC$ alone is still unknown.

We will not resolve Sacks's conjecture in this paper. Instead, we will present a curious phenomenon related to it. Namely, we will demonstrate that it is easy to embed partial orders of height two into the Turing degrees, but hard to embed partial orders of height three. 
 
This statement deserves some explanation. First, we will prove (in $\ZFC$) that every locally countable partial order of size continuum and height two embeds into the Turing degrees. We will also show that this result is robust, in the sense that it holds even in settings where only weak forms of choice are available. 

Second, we will show that there is a general obstacle to embedding partial orders into the Turing degrees which implies that our method of embedding height two partial orders cannot be extended to partial orders of height three. Essentially, our method for height two partial orders embeds the first level of the partial order as a perfect set, but we show that whenever the image of an embedding contains a perfect set, the embedding cannot be extended very much. Moreover, in some settings this obstacle actually yields an outright proof that not all locally countable partial orders of size continuum and height three can be embedded into the Turing degrees, including some of the settings in which our proof for height two partial orders works. Thus, there are some settings in which all locally countable partial orders of size continuum and height two embed into the Turing degrees, but the same does not hold for height three.

We will show that this is the case in two particular settings: the $\ZF$ setting, where we work in certain extensions of $\ZF$ which contradict the full Axiom of Choice (but still satisfy weak forms of it), and the Borel setting, where the partial orders and embeddings are required to be Borel measurable.\footnote{It has been observed before that there are many similarities between the $\ZF$ setting and the Borel setting. One example can be found in the work of Shani on Borel equivalence relations~\cite{shani2021borel}.} At the end of the paper, we will mention some connections between our results and the theory of countable Borel equivalence relations.

We will now give more precise statements of these results (in particular, we will specify which extensions of $\ZF$ we consider and what we mean by the ``Borel setting'') and discuss the context for our obstacle to embedding partial orders into the Turing degrees.


\subsection{Partial orders of finite height}

For any natural number $n > 0$, a partial order has \term{height \boldmath$n$} if its longest chain has length exactly $n$. We can think of such a partial order as consisting of $n$ ``levels'': the first level consists of those elements of the partial order with no predecessors, the second level consists of those elements whose predecessors are all in the first level (and which have at least one predecessor in the first level), the third level consists of those elements whose predecessors are all in the first two levels (and which have at least one predecessor in the second level), and so on. The fact that there are no chains of length greater than $n$ implies that every element will end up in one of these $n$ levels (and the fact that there is some chain of length $n$ implies that no level is empty). A typical partial order of height three, stratified into levels in this manner, looks something like this:
\begin{figure}[ht]
\centering
\begin{tikzpicture}
    \foreach \i in {0,1,...,5}
    { 
        \node[black, circle, fill, inner sep=2pt] (a\i) at (\i, 0) {};
    }
    \foreach \i in {0,1,...,4}
    { 
        \node[black, circle, fill, inner sep=2pt] (b\i) at (\i, 0.8) {};
    }
    \foreach \i in {0,1,3}
    { 
        \node[black, circle, fill, inner sep=2pt] (c\i) at (\i, 1.6) {};
    }
    \foreach \x/\y in {a0/b0, b0/c0, a1/b1, b1/c1, a2/b1, a2/b2, a3/b3, b3/c3, a2/c3, b2/c1, a3/b4}
    {
        \draw (\x) -- (\y);
    }
\end{tikzpicture}
\end{figure}


\noindent As stated above, we will prove that Sacks's conjecture holds for partial orders of height two.

\begin{restatable}{theorem}{heighttwozfc}
\label{thm:height2zfc}
Every locally countable partial order of size continuum and height two embeds into the Turing degrees.
\end{restatable}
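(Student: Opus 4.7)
The plan is a two-phase construction: first, embed the bottom level of $P$ into a perfect set of reals with a strong Turing-independence property; then, extend the embedding to the top level by joining the images of predecessors with a personal auxiliary real.

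In Phase 1, I would fix a perfect set $A \subseteq 2^\omega$ of size $\c$ with the following \emph{countable-join Turing-independence} property, which I will call $(*)$: for every countable $C \subseteq A$ and every $a \in A \setminus C$, $a \not\leq_T \bigoplus C$, where $\bigoplus C$ denotes the effective join along some fixed enumeration of $C$. Granted such $A$, partition $A = A_0 \sqcup A_1$ with $|A_0| = |A_1| = \c$, inject $B \hookrightarrow A_0$ via an arbitrary map $f|_B$, and reserve $A_1$ for the top level.

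In Phase 2, enumerate $T = \{t_\alpha : \alpha < |T|\}$ and define $f(t_\alpha)$ by transfinite recursion of length $|T| \leq \c$. At stage $\alpha$, pick $g_\alpha \in A_1 \setminus \{g_\beta : \beta < \alpha\}$, which exists since $|\alpha| < \c = |A_1|$, and set $f(t_\alpha) = g_\alpha \oplus \bigoplus_{b \in S_{t_\alpha}} f(b)$, where $S_{t_\alpha}$ is the countable set of predecessors of $t_\alpha$ in $P$. Verification that $f$ is an embedding then reduces to direct applications of $(*)$ to various countable subsets of $A$: for instance, to see $f(b) \not\leq_T f(t_\alpha)$ when $b \notin S_{t_\alpha}$, observe that the countable set $D = \{g_\alpha\} \cup \{f(b') : b' \in S_{t_\alpha}\}$ is a subset of $A$ not containing $f(b)$ (since $f(b) \in A_0$, $g_\alpha \in A_1$, and $b \notin S_{t_\alpha}$), so $(*)$ gives $f(b) \not\leq_T \bigoplus D = f(t_\alpha)$. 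The remaining cases --- $f(b) \not\leq_T f(b')$ for distinct $b, b' \in B$, $f(t_\alpha) \not\leq_T f(b)$, and $f(t_\alpha) \not\leq_T f(t_\beta)$ for $\alpha \neq \beta$ --- are handled by analogous applications of $(*)$.

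The main obstacle is Phase 1: producing a perfect set $A$ satisfying $(*)$. The pairwise version (any two distinct elements of $A$ are Turing-incomparable) is classical, but the countable-join strengthening cannot be achieved by a naive transfinite recursion of length $\c$, because at already countable stages the number of relevant countable subsets of previously-chosen reals can reach $\c$, leaving no room for diagonalization. I would pursue this via a fusion construction on perfect trees, refining at each of countably many stages to rule out the countable-join reductions arising from the current branch approximations, exploiting that only countably many Turing reduction indices need to be addressed at each stage.
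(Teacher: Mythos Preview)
Your two-phase plan mirrors the paper's strategy, but Phase~1 has a real gap: no perfect set $A$ can satisfy the countable-join independence property~$(*)$ you need, and this is witnessed precisely by the paper's own technical tool, Theorem~\ref{thm:perfect}. Split $A$ into disjoint perfect subsets $A_0', A_1'$, fix a countable dense $B'\subseteq A_0'$ with enumeration $(b_k)$, and let $b=\bigoplus_k b_k$. Theorem~\ref{thm:perfect} applied to $(A_0',B',b)$ gives, for any $y\in A_1'$, reals $d_1,\dots,d_4\in A_0'$ with $b\oplus d_1\oplus\cdots\oplus d_4\geq_T y$. Taking $C=B'\cup\{d_1,\dots,d_4\}$ enumerated as $d_1,d_2,d_3,d_4,b_0,b_1,\dots$ yields $\bigoplus C\geq_T y$ with $y\in A\setminus C$, so $(*)$ fails. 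Your fusion sketch cannot rescue this: a fusion argument controls, at each finite stage, only finitely many leaf cones, so it can diagonalize against \emph{finite} joins of branches but has no leverage over joins indexed by an infinite sequence of branches drawn from the continuum-many that eventually appear. The phrase ``countable-join reductions arising from the current branch approximations'' does not name a countable set of requirements.

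The paper avoids $(*)$ altogether. It only requires the perfect set $[T]$ to be \emph{finitely} Turing independent (Sacks's classical construction, Lemma~\ref{lemma:tree}), and then for each top-level element $x$ it builds $f(x)$ not as a naive join $g\oplus\bigoplus f(S_x)$ but via a Spector-style forcing construction (Lemma~\ref{lemma:upperbound}): an upper bound whose $n$-th column is $\sigma_n{}^\frown x_n$, with the finite prefixes $\sigma_n$ chosen generically so that the result computes no element of $[T]$ outside the prescribed countable set. The crucial point is that the forcing argument in Lemma~\ref{lemma:upperbound} needs only the \emph{finite} independence of $[T]$ to rule out Case~4, which is exactly why it succeeds where your proposed~$(*)$ cannot even exist.
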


An equivalent statement was also proved by Kumar and Raghavan in~\cite{kumar2021separating}, by a somewhat different technique. However, for reasons that we will discuss later, their proof does not generalize to other settings as well as ours.

\subsection{Obstacles to embedding partial orders in the Turing degrees}

Suppose you have a partial order $(P, \leq_P)$ of size continuum and you want to embed $P$ into the Turing degrees. A reasonable approach is to pick a well-ordering of $P$ of length continuum and define an embedding by transfinite recursion. In other words, pick up elements of $P$ one at a time and show that as long as you have embedded fewer than continuum many elements so far, there is always a place to map the next element to. This is essentially the approach taken by Sacks to embed locally countable partial orders of size $\omega_1$ in~\cite{sacks1963degrees}.

A fundamental obstacle to using this approach to solve Sacks's Conjecture was discovered by Groszek and Slaman~\cite{groszek1983independence}. Say that a set $A$ of Turing degrees is \term{Turing independent} if there is no finite subset of $A$ whose join computes some element of $A$ not in the subset. Groszek and Slaman proved that it is consistent with $\ZFC$ that there is a maximal Turing independent set of size less than continuum. Thus if you want to construct an embedding by transfinite recursion, you have to be careful not to end up with this particular Turing independent set in the image of your embedding at any step in the recursion. For suppose that you do. If you later encounter another element of $P$ which is sufficiently independent of all the elements you have seen so far, there will be nowhere to map it to. Kumar has used Groszek and Slaman's technique to show that a similar problem may occur even when embedding a partial order of height three whose first level has size $\omega_1$~\cite{kumar2019suborders}.

One potential solution to this problem is to make the transfinite recursion satisfy some stronger inductive assumption that prevents it from accidentally building any set like the one constructed by Groszek and Slaman, but no such condition has been identified so far. 

A different potential solution is to take a more structural approach. To illustrate what we mean, suppose that the partial order $P$ which we want to embed has finite height. Instead of embedding the elements of $P$ one at a time, we can first find an especially nice subset of the Turing degrees to map the first level of $P$ to, then use the niceness of this subset to find another nice subset to map the second level to, and so on. 

In fact, this is exactly the approach we use to prove Theorem~\ref{thm:height2zfc}. In particular, we embed the elements of the first level of the partial order as the Turing degrees of a Turing independent perfect set of reals. Since we don't rely on transfinite recursion, this approach is easier to generalize to settings with only limited forms of choice.

It might seem reasonable to hope that our proof can be generalized to deal with partial orders of any finite height. For example, we might try to show that the second level of $P$ can also be embedded as a Turing independent perfect set, and more generally that if the $n^\text{th}$ level can be embedded as a Turing independent perfect set then so can the $(n + 1)^\text{st}$ level. However, our next theorem shows that this is not possible.

\begin{restatable}{theorem}{obstacleone}
\label{thm:obstacle1}
There is a locally countable partial order $(P, \leq_P)$ of size continuum and height three which has the following properties.
\begin{enumerate}
    \item The first level of $P$ has size continuum.
    \item If $f$ is any function from $P$ into the Turing degrees such that the image of $f$ on the first level of $P$ contains a perfect set then $f$ is not an embedding.
\end{enumerate}
\end{restatable}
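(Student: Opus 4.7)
I propose to take $P$ to be as simple as possible, with the third level only present to satisfy the height-three requirement, and then to use a Feferman-style coding lemma for perfect sets to force any $f$ satisfying (2) to violate order-preservation between the first and second levels.

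Concretely, I would let $P_1 \defeq \Cantor$ with its elements pairwise $\leq_P$-incomparable, add one element $u_{\{x,y\}}$ to $P_2$ above each unordered pair $\{x,y\} \subseteq \Cantor$ of distinct first-level reals (so each $u_{\{x,y\}}$ has exactly two predecessors), and let $P_3 = \{v\}$ with $v$ strictly above a single fixed $u_{\{x_0, y_0\}}$. The resulting $P$ has size $\c$, height exactly three (witnessed by the chain $x_0 <_P u_{\{x_0, y_0\}} <_P v$), first level of size $\c$, and is locally countable (every element has at most three predecessors), so the background requirements and condition (1) are met.

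To verify (2), I would rely on the following coding lemma: for any perfect set $Q \subseteq \Cantor$ there exist pairwise distinct $a, b, z \in Q$ with $z \leq_T a \oplus b$. The underlying idea is that any perfect tree $T$ admits branches that are sufficiently generic to compute $T$, and once $T$ is available one can code the bits of $z$ into a pair $(a,b)$ by using them to select between the two sides at each splitting node of $T$. Given the lemma, suppose $f: P \to \mathcal{D}$ is an embedding with $f(P_1) \supseteq Q$. Since $P_1$ is a $\leq_P$-antichain and $f$ is order-preserving, $f$ is injective on $P_1$ and $Q$ must be a perfect $T$-antichain. Pick $a, b, z \in Q$ as given by the lemma and let $x_p, y_p, z_p \in P_1$ be their unique $f$-preimages. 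Then
\[ f(u_{\{x_p, y_p\}}) \;\geq_T\; f(x_p) \oplus f(y_p) \;=\; a \oplus b \;\geq_T\; z \;=\; f(z_p), \]
while $z_p \not\leq_P u_{\{x_p, y_p\}}$ because $z_p$ is distinct from both $x_p$ and $y_p$, contradicting the fact that $f$ is an embedding.

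The third-level element $v$ plays no role in the argument above; it is included only to force the height to be exactly three. I expect the main obstacle to be the coding lemma itself: the naive construction codes $z$ into $(a, b)$ relative to the tree $T_Q$ of $Q$, which only yields $z \leq_T a \oplus b \oplus T_Q$. Removing $T_Q$ requires arranging that at least one of the two branches is generic enough in $[T_Q]$ to compute $T_Q$ from itself, so that $T_Q$ is already recoverable from $a \oplus b$; this step needs a Baire-category or forcing argument inside the perfect tree, and is the one place where I would have to work carefully.
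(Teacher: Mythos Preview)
Your coding lemma is false, and this breaks the argument. The paper's own Lemma~\ref{lemma:tree} (due to Sacks) constructs a perfect tree $T$ whose set of branches is Turing independent: no finite join of elements of $[T]$ computes any other element. For this perfect set there are no distinct $a,b,z$ with $z \leq_T a \oplus b$, so your lemma fails outright. The proposed repair---arranging that one branch is generic enough to compute the tree---cannot help: once $[T]$ is Turing independent, the configuration you need is impossible regardless of how individual branches relate to $T$.

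What actually works is weaker and requires an extra oracle. The paper's Theorem~\ref{thm:perfect} says that if $b$ already computes a countable dense subset of the perfect set $A$, then for any target $c$ there exist $d_1,\dots,d_4 \in A$ with $b \oplus d_1 \oplus \cdots \oplus d_4 \geq_T c$. The presence of $b$ forces the partial order to be far richer than yours: $P_2$ must contain upper bounds for arbitrary \emph{countable} subsets of $P_1$ (so that some $x \in P_2$ maps to a degree playing the role of $b$), and $P_3$ must contain, for each finite $Q \subseteq P_1 \cup P_2$ and each $q \in P_2 \setminus Q$, an element above all of $Q$ but not above $q$ (so that joining $f(x)$ with the images of the preimages of the $d_i$ produces the contradiction). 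Thus the contradiction is obtained at the third level, not the second, and your remark that ``the third-level element plays no role'' is exactly backwards. Correspondingly, your order---whose second-level elements each sit above only two points and whose third level is a single point---is too thin to support any such argument; indeed the height-two construction of Section~\ref{sec:height2} already embeds your $P_1 \cup P_2$ with $P_1$ sent bijectively onto a perfect Turing independent set.
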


The fist condition on $P$ may appear somewhat arbitrary, but it is necessary to make the second condition nontrivial: if the first level of $P$ has size less than continuum then its image under $f$ cannot contain a perfect set for cardinality reasons and so the second condition is vacuously true.

It is common to phrase obstacles to embedding partial orders into the Turing degrees in terms of obstacles to extending Turing independent sets. We can also do that here. Note that when we say that a set of reals $A$ is Turing independent, we mean that no finite subset of $A$ computes any other element of $A$.

\begin{restatable}{theorem}{obstacletwo}
\label{thm:obstacle2}
Suppose $A$ is a Turing independent set of reals which contains a perfect set, $A'$, $B$ is a countable, dense subset of $A'$ and $x$ computes every element of $B$. Then $(A\setminus B)\cup \{x\}$ is not Turing independent.
\end{restatable}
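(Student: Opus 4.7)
The plan is to show that $x$ alone computes some element $y \in A' \setminus B$ with $y \neq x$. Once such $y$ is produced, the singleton $\{x\}$ is a finite subset of $(A\setminus B)\cup\{x\}$ computing $y \in (A\setminus B)\cup\{x\}$ with $y \notin \{x\}$, witnessing the failure of Turing independence. As a preliminary observation, $B$ is infinite (being dense in the uncountable perfect set $A'$), so some $b \in B$ is distinct from $x$. If $x$ were in $A$, the singleton $\{x\} \subseteq A$ would compute this $b \in A\setminus\{x\}$, contradicting Turing independence of $A$. Hence $x \notin A$, and in particular $x \notin A'$.

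For the main construction, I would fix an enumeration $B = \{b_n : n \in \omega\}$ uniformly computable from $x$---the intended reading of the hypothesis---and work with the tree
\[
T = \{\sigma \in 2^{<\omega} : \sigma \sqsubseteq b_n \text{ for some } n \in \omega\},
\]
which is computable from $x$. Since $B$ is dense in the closed set $A'$, we have $[T] = \overline{B} = A'$; and since $A'$ is perfect, $T$ is a perfect tree, in the sense that every $\sigma \in T$ has an extension $\tau \in T$ with both $\tau\concat 0$ and $\tau\concat 1$ in $T$. I would then build $y \in [T] = A'$ recursively from $x$, diagonalizing against $B$: at stage $n$, given $\sigma_n \in T$, locate a splitting node $\tau \sqsupseteq \sigma_n$ in $T$ and set $\sigma_{n+1}$ to be the child of $\tau$ in $T$ whose bit at position $|\tau|$ differs from $b_n(|\tau|)$. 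The path $y = \bigcup_n \sigma_n$ lies in $A'$, differs from every $b_n$ (so $y \notin B$), is distinct from $x$ (since $x \notin A'$), and satisfies $y \leq_T x$. This gives the desired $y \in A\setminus B$ computable from $\{x\}$.

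The main obstacle I foresee is the uniformity gap in the hypothesis: the construction requires a uniform enumeration of $B$ computable from $x$, whereas ``$x$ computes every element of $B$'' is naturally pointwise. If the uniform reading is intended, the plan is complete. Under the strict pointwise reading, the fix is to absorb the indices of the reductions into the computing side, finding a finite $F \subseteq A\setminus B$ for which $x \oplus \bigoplus F$ uniformly enumerates $B$; the same diagonalization then produces $y \in (A\setminus B)\setminus F$ computable from $\{x\}\cup F$, again contradicting Turing independence of $(A\setminus B)\cup\{x\}$.
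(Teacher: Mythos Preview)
Your primary construction requires more than the hypothesis gives. The theorem asserts only that $x$ computes each element of $B$ pointwise; it does not say that $x$ computes an enumeration of $B$, and in fact it cannot. The paper's own Lemma~\ref{lemma:upperbound} produces, for any countable $B \subseteq [T]=A'$ with $[T]$ Turing independent, an upper bound $x$ that computes every element of $B$ yet computes \emph{no} element of $A'\setminus B$. For such an $x$ your diagonalization is impossible in principle: there simply is no $y\in A'\setminus B$ with $y\le_T x$, regardless of how clever the construction is. So the uniformity issue you flag is not a removable technicality---it is the whole difficulty.

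Your proposed fix does not close the gap either. You suggest finding a finite $F\subseteq A\setminus B$ so that $x\oplus\bigoplus F$ uniformly enumerates $B$, but there is no mechanism that would force such an $F$ to exist: the elements of $A\setminus B$ are part of a Turing independent set and carry no information about the sequence of reduction indices witnessing $b_n\le_T x$. The paper sidesteps uniformity altogether by appealing to Theorem~\ref{thm:perfect}: it splits $A'$ into disjoint perfect pieces $A_0'$ and $A_1'$ with $B\cap A_0'$ dense in $A_0'$, picks any $y\in A_1'\setminus B$, and obtains $z_1,\dots,z_4\in A_0'$ with $x\oplus z_1\oplus\cdots\oplus z_4\ge_T y$; any $z_i$ that happens to lie in $B$ can be discarded since $x$ already computes it. The remaining $z_i$'s together with $x$ form a finite subset of $(A\setminus B)\cup\{x\}$ computing $y$, which lies in that set by construction. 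The point is that the four auxiliary reals from Theorem~\ref{thm:perfect} are doing real work that $x$ alone cannot.
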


To prove Theorems~\ref{thm:obstacle1} and~\ref{thm:obstacle2}, we rely on a technical theorem on perfect sets, previously used by the second author and Benjamin Siskind in work on Martin's Conjecture~\cite{lutz2023part}.

\subsection{The \texorpdfstring{$\ZF$}{ZF} setting}

By examining the proof of Theorem~\ref{thm:height2zfc} above, we will see that it does not require the full Axiom of Choice. In particular, let us consider two weak choice principles: \term{Uniformization for Reals}, denoted $\Unif_\R$, and \term{Lusin-Novikov Choice}, denoted $\LN$.

\begin{itemize}
    \item $\Unif_\R$ states that every real-indexed family of nonempty sets of reals has a choice function---i.e.\ if $R$ is a binary relation on $\Cantor$ such that for each $x$, $\{y \mid R(x, y)\}$ is nonempty, then there is some function $f\colon \Cantor \to \Cantor$ such that for each $x$, $R(x, f(x))$ holds.
    \item $\LN$ states that for every binary relation $R$ on $\Cantor$, if every section of $R$ is countable (i.e.\ for each $x\in \Cantor$, $\{y \in \Cantor \mid R(x, y)\}$ is countable) then there is a function $f\colon \Cantor \to (\Cantor)^{\leq \omega}$ enumerating the elements of each section.
\end{itemize}

It is not hard to check that $\Unif_\R$ implies $\LN$ (the point is just that the set of enumerations of a countable set of reals can itself be thought of as a set of reals). We will see that it is basically trivial to modify the proof of Theorem~\ref{thm:height2zfc} to work in $\ZF + \Unif_\R$ and, with only slightly more care, it is also possible to modify the proof to work in $\ZF + \LN$. Thus we have the following theorem.

\begin{restatable}[$\ZF + \LN$]{theorem}{heighttwozf}
\label{thm:height2zf}
Every locally countable partial order of size continuum and height two embeds into the Turing degrees.
\end{restatable}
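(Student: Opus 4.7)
The plan is to inspect the proof of Theorem~\ref{thm:height2zfc} and check that every step goes through in $\ZF + \LN$, with $\LN$ covering the one place where full choice enters the $\ZFC$ argument.

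First I would produce, in $\ZF$ alone, a perfect set $C \subseteq \Cantor$ that is \emph{countably Turing independent}: for every countable $B \subseteq C$ and every $z \in C \setminus B$, $z \not\leq_T \bigoplus B$. I expect to build such a $C$ by an explicit fusion on a perfect tree, shrinking at stage $e$ to diagonalize against the $e$th Turing functional $\Phi_e$ so that no countable join of branches can ever compute a different branch via $\Phi_e$. I would then split $C$ into two disjoint perfect subsets $C_1, C_2$, which remain jointly countably Turing independent. Next, fix a bijection $\phi: P \to \Cantor$ witnessing $|P| = \c$ and canonical homeomorphisms $\psi_i: \Cantor \to C_i$ read off from the defining trees; neither map needs any choice. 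Split $P$ definably into its two levels $L_1, L_2$. Apply $\LN$ to the relation $R(y, x) \iff x <_P y$ (transferred to $\Cantor$ via $\phi$), whose sections are countable by local countability, to obtain, for each $y \in L_2$, a uniform enumeration $\langle x_i^y : i < \omega \rangle$ of the predecessors of $y$. The embedding is then
\[
f(x) = \psi_1(\phi(x)) \quad (x \in L_1), \qquad f(y) = \Bigl(\bigoplus_{i < \omega} f(x_i^y)\Bigr) \oplus \psi_2(\phi(y)) \quad (y \in L_2).
\]

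Verifying that $f$ induces an order-embedding into the Turing degrees should then be routine from countable Turing independence of $C_1 \cup C_2$. The forward direction $x <_P y \implies f(x) \leq_T f(y)$ is immediate. For the converse, if $f(z) \leq_T f(y)$ with $z \in L_1$ and $y \in L_2$, then $f(z) \in C_1 \cup C_2$ is Turing below a countable join of elements of $C_1 \cup C_2$, so by countable independence $f(z)$ must itself appear in that join; since $f(z) \in C_1$ while the tag $\psi_2(\phi(y))$ lies in $C_2$, this forces $f(z) = f(x_i^y)$ for some $i$, and injectivity of $\psi_1 \circ \phi$ on $L_1$ gives $z = x_i^y$. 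Incomparability of distinct degrees in $f(L_1)$ follows from Turing independence of $C_1$, and distinct $y, y' \in L_2$ yield distinct $f$-values because their tags $\psi_2(\phi(y))$ and $\psi_2(\phi(y'))$ lie in $C_2$ and cannot be absorbed into the $C_1$-part of the other element's join.

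The hard part will be the first step, performing the fusion construction of a perfect, countably Turing-independent set in $\ZF$ alone. The construction is standard, but I will need to verify that every choice made in the recursion—which splittings of the current tree to take, how to enumerate the finitely many conditions to handle at stage $e$—can be made canonically, so that no implicit appeal to choice slips in through the back door. Once this is in hand, $\LN$ does the remaining selection work, precisely through the uniform predecessor enumeration that lets us assemble the joins $\bigoplus_i f(x_i^y)$ coherently across all $y \in L_2$.
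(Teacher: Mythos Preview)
Your proposal has a genuine gap at the very first step: a countably Turing independent perfect set does not exist, so the fusion you sketch cannot succeed.

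Here is a direct argument. Let $T$ be any perfect tree and $z \in [T]$ any branch. Let $s_0 \prec s_1 \prec \cdots$ be the successive splitting nodes of $T$ along $z$, and for each $n$ let $b_n \in [T]$ be a branch that agrees with $z$ up to $s_n$ but takes the opposite direction at $s_n$ (say, the leftmost such branch). Then for $n < m$ the branches $b_n$ and $b_m$ first disagree exactly at position $|s_n|$, so the longest common prefix of $b_n$ and $b_{n+1}$ is $s_n$. Hence from the oracle $\bigoplus_n b_n$ (column $n$ equal to $b_n$) one uniformly recovers each $s_n$ and therefore $z = \bigcup_n s_n$. Thus $z \leq_T \bigoplus_n b_n$ while $z \notin \{b_n : n \in \omega\}$, so $[T]$ is not countably Turing independent. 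The same argument applied to $C_1$ alone already breaks the weaker property you actually need for your verification of case~(3): a countable join from $C_1$ (together with anything else) can compute a further element of $C_1$.

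This is also why the fusion cannot be carried out: at stage $e$ you would be trying to diagonalize against continuum-many possible oracle sequences $(x_i)_i$ of eventual branches, not finitely many leaf configurations as in the standard (finite-independence) construction.

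The paper's proof avoids this entirely. It uses only the \emph{finitely} Turing independent perfect set of Lemma~\ref{lemma:tree}, and for each $y$ in the second level it does \emph{not} take the naive infinite join of the predecessors' images. Instead it invokes Lemma~\ref{lemma:upperbound}, a Spector-style construction that, given an enumeration of a countable $A \subseteq [T]$, produces an upper bound computing exactly the elements of $A$ and no other branch of $T$; the construction is arithmetic in the enumeration, so once $\LN$ supplies the predecessor enumerations the rest is choice-free. Your identification of where $\LN$ is needed (uniform enumeration of predecessors) is exactly right; what is missing is replacing the infinite join $\bigoplus_i f(x_i^y)$ by this more delicate exact upper bound.
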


On the other hand, there is also an extension of $\ZF$ in which we can use Theorem~\ref{thm:obstacle1} to show that not every locally countable partial order of size continuum and height three can be embedded into the Turing degrees. Let $\PSP$ denote the \term{Perfect Set Principle}, which states that every subset of $\Cantor$ is either countable or contains a perfect subset. The key point is that in $\ZF + \PSP$, the hypothesis of Theorem~\ref{thm:obstacle1} is automatically satisfied. Using this observation, we will prove the following theorem.

\begin{restatable}[$\ZF + \PSP$]{theorem}{heightthreezf}
\label{thm:height3zf}
There is a locally countable partial order of size continuum and height three which does not embed into the Turing degrees.
\end{restatable}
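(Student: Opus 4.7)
The plan is to take $P$ to be the partial order witnessing Theorem~\ref{thm:obstacle1} and argue, working in $\ZF + \PSP$, that no embedding $f\colon P \to \mathcal{D}_T$ exists. Suppose toward a contradiction that such an $f$ does exist, and let $L_1$ denote the first level of $P$. By clause $(1)$ of Theorem~\ref{thm:obstacle1}, $|L_1| = \c$, and since $f$ is an embedding it is injective on $L_1$, so $|f(L_1)| = \c$. Form the set of reals $R = \bigcup_{d \in f(L_1)} d \subseteq \Cantor$; since distinct Turing degrees are disjoint countable sets, $|R| = \c$, so in particular $R$ is uncountable. Applying $\PSP$ to $R$ then yields a perfect subset $Q_0 \subseteq R$.

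The next step is to refine $Q_0$ to a perfect subset $Q \subseteq Q_0$ consisting of pairwise Turing-incomparable reals, so that the map $q \mapsto [q]_T$ becomes an injection of $Q$ into $f(L_1)$. This is a classical Kleene-Post style finite-extension construction: taking a perfect tree $T_0 \subseteq 2^{<\omega}$ with $[T_0] = Q_0$, one recursively builds a perfect subtree $T \subseteq T_0$ by choosing splitting nodes $\sigma_s$ (indexed by $s \in 2^{<\omega}$) so as to defeat, stage by stage, each potential Turing reduction between branches extending distinct $\sigma_s$. The construction is entirely explicit and proceeds through $\omega$ stages, so it goes through in $\ZF$ alone with no form of choice required. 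With such a $Q$ in hand, clause $(2)$ of Theorem~\ref{thm:obstacle1} applies to conclude that $f$ is not an embedding, the desired contradiction.

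The one point I would double-check is that the phrase "contains a perfect set" in the statement of Theorem~\ref{thm:obstacle1} really does mean the existence of an injection of a perfect set $Q \subseteq \Cantor$ into $f(L_1)$ via $q \mapsto [q]_T$; this appears to be the intended reading, particularly in view of how the companion Theorem~\ref{thm:obstacle2} on Turing-independent sets is formulated directly in terms of perfect subsets of sets of reals. Beyond that, there is no serious obstacle: the role of $\PSP$ here is purely to upgrade "uncountable" to "contains a perfect subset" at one crucial spot, and all of the combinatorial and recursion-theoretic content of the theorem has already been encapsulated in Theorem~\ref{thm:obstacle1}.
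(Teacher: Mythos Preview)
Your approach is essentially the paper's: take $P$ from Theorem~\ref{thm:obstacle1}, suppose an embedding $f$ exists, form the set of reals whose degree lies in $f(L_1)$, apply $\PSP$ to extract a perfect subset, and invoke clause~(2) for a contradiction. The argument is correct.

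However, your refinement step is unnecessary, because the interpretation of ``contains a perfect set'' that you were worried about is not the one the paper uses. The paper explicitly clarifies (just before the proof of Theorem~\ref{thm:obstacle1}) that the condition means: there exists a perfect set $A \subseteq \Cantor$ such that the Turing degree of \emph{each} element of $A$ lies in $f(L_1)$. There is no requirement that $q \mapsto [q]_T$ be injective on $A$. Under this reading, your $Q_0$ already witnesses the hypothesis of Theorem~\ref{thm:obstacle1}, since $Q_0 \subseteq R$ and every element of $R$ has degree in $f(L_1)$ by definition. So the Kleene--Post style thinning to a perfect antichain can simply be dropped. (Your instinct to double-check the phrasing was reasonable---it is genuinely ambiguous as stated in the theorem---but the paper resolves it in the direction that makes your life easier.)

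One small cleanup: you only need $R$ to be uncountable, not $|R| = \c$, and the former has a clean $\ZF$ justification (a countable $R$ would, via least elements of the images of each degree under an enumeration, inject $f(L_1)$ into $\omega$). The paper likewise argues only that the analogous set is uncountable.
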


Note that the theory $\ZF + \LN + \PSP$, in which Theorems~\ref{thm:height2zf} and~\ref{thm:height3zf} are both provable, is known to be consistent and thus there is a single consistent extension of $\ZF$ in which Sacks's conjecture holds for partial orders of height two but not for partial orders of height three. 

Here's one way to see that $\ZF + \LN + \PSP$ is consistent. Let $\AD_\R$ denote the \term{Axiom of Real Determinacy}, an axiom which has been extensively studied in inner model theory~\cite{solovay1978independence}. $\ZF + \AD_\R$ implies the Axiom of Determinacy, and hence $\PSP$ (see \cite{jech2003set}, Theorem 33.3) as well as $\Unif_\R$~\cite{solovay1978independence}, and hence $\LN$. Furthermore, $\ZF + \AD_\R$ is known to be consistent, assuming large cardinals.

As a side note, Theorem~\ref{thm:height3zf} shows that Sacks's conjecture is independent of $\ZF$. As far as we are aware, this fact has not been published before.\footnote{Also note that unlike the theory $\ZF + \LN + \PSP$, which is known to have greater consistency strength than $\ZF$, $\ZF + \PSP$ is equiconsistent with $\ZF$ by a result of Truss (see Theorem 3.2 of~\cite{truss1974models}, though note that Glazer has identified a mistake in that paper which will be fixed in his upcoming thesis~\cite{glazer2023personal}).}

\subsection{The Borel setting}

Statements analogous to Theorems~\ref{thm:height2zf} and~\ref{thm:height3zf} also hold in the Borel setting, where we consider only Borel partial orders and Borel embeddings.

A \term{Borel partial order} is simply a partial order $(P, \leq_P)$ such that $P$ and $\leq_P$ are Borel measurable subsets of $\Cantor$ and $\Cantor\times \Cantor$, respectively.\footnote{One may instead assume that $P$ is a standard Borel space and $\leq_P$ is a Borel measurable subset of $P \times P$; this does not make a difference for any of the results of this paper.} If $(P, \leq_P)$ is a Borel partial order then a \term{Borel embedding} of $P$ into Turing reducibility is a Borel measurable function $f\colon P \to \Cantor$ such that for all $x, y \in P$,
\[
    x \leq_P y \iff f(x) \leq_T f(y).
\]
One oddity here is that Turing reducibility itself is not a Borel partial order according to our definition (because it is not a partial order on $\Cantor$, but only a quasi-order). We will return to this point in section~\ref{sec:cber}.

It is relatively straightforward to modify the proofs of Theorems~\ref{thm:height2zf} and~\ref{thm:height3zf} to yield the following (essentially we just replace $\LN$ and $\PSP$ with analogous theorems provable in the Borel setting).

\begin{restatable}{theorem}{heighttwoborel}
\label{thm:height2borel}
Every locally countable Borel partial order of height two has a Borel embedding into Turing reducibility.
\end{restatable}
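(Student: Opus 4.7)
The plan is to run the proof of Theorem~\ref{thm:height2zf} inside the category of Borel sets and Borel maps, replacing each appeal to $\LN$ with the classical Luzin-Novikov uniformization theorem and each appeal to the existence of perfect sets with the Borel perfect set theorem. Both of these tools are provable in $\ZFC$, so the resulting embedding is automatically Borel.

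Let $(P,\leq_P)$ be a locally countable Borel partial order of height two, with Borel first level $L_1$ and Borel second level $L_2$. The proof of Theorem~\ref{thm:height2zf} produces a Borel injection $\phi\colon\Cantor\to\Cantor$ whose image $A$ is perfect and ``countably Turing independent'' in the sense that no countable $S\subseteq A$ computes any element of $A\setminus S$. The construction of $\phi$ is essentially explicit---one recursively builds a perfect tree whose branches are mutually sufficiently generic reals---and is visibly Borel. Checking this Borel character carefully is the technical bulk of the argument.

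Given such a $\phi$, I would then proceed as follows. Fix a Borel injection $\iota\colon P\to\Cantor$ (one may take $\iota$ to be the inclusion, since $P$ is by assumption a Borel subset of $\Cantor$). Apply Luzin-Novikov to the Borel, countable-sectioned relation
\[
    R = \{(q,p)\in L_2\times L_1 : p<_P q\}
\]
to obtain a Borel enumeration $q\mapsto (p^q_i)_{i<\omega}$ of the predecessors of each $q\in L_2$. Then define
\[
    f(p) = \phi(\iota(p)) \text{ for } p\in L_1, \qquad f(q) = \phi(\iota(q)) \oplus \bigoplus_{i<\omega} f(p^q_i) \text{ for } q\in L_2.
\]
The ``padding'' summand $\phi(\iota(q))$ ensures that $f$ is injective on $L_2$ and that $f(q)\not\leq_T f(p)$ whenever $p<_P q$.

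Verifying that $f$ is a Borel embedding then reduces to routine checks using the countable Turing independence of $A$: namely, $p<_P q \Leftrightarrow f(p)\leq_T f(q)$, and $\leq_P$-incomparable elements of $P$ map to Turing-incomparable reals. The main obstacle is establishing that $\phi$ can be chosen Borel; once that is done, the remaining construction is a direct Borelification of the $\ZF+\LN$ proof.
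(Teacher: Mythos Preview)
Your approach has a genuine gap: no perfect set $A\subseteq\Cantor$ can be ``countably Turing independent'' in the sense you need. Indeed, if $B=\{b_i:i<\omega\}$ is any countable dense subset of a perfect set $A=[T]$ (with $T$ pruned), then from the join $x=\bigoplus_i b_i$ one can enumerate $T$ (since $\sigma\in T$ iff some $b_i$ extends $\sigma$, by density) and then diagonalize: at stage $s$, search the enumeration of $T$ for two incomparable extensions of the current node and pick one that is not an initial segment of $b_s$. The resulting path lies in $[T]\setminus B=A\setminus B$ and is computable from $x$. So the infinite join of a dense countable subset \emph{always} computes an extra element of $A$. Concretely, your construction fails for the height-two order $P$ with $L_1=\Cantor\setminus\{q\}$, $L_2=\{q\}$, and predecessors of $q$ exactly $\phi^{-1}(D)$ for some countable dense $D\subseteq A\setminus\{\phi(q)\}$: then $f(q)$ computes some $a^*=\phi(r)\in A\setminus(D\cup\{\phi(q)\})$ with $r\in L_1$ and $r\not<_P q$, so $f(q)\ge_T f(r)$ despite $r\not\le_P q$.

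The paper's proof does \emph{not} claim countable Turing independence of $[T]$; it uses only ordinary (finite) Turing independence together with a refined, Borel version of the Spector-style upper bound construction (Lemma~\ref{lemma:upperboundborel}). That lemma produces a Borel map $h_T$ which, given an enumeration of a countable $S\subseteq[T]$, returns an upper bound for $S$ computing no element of $[T]\setminus S$. The point is that $h_T$ is built by a forcing argument that actively prevents each Turing functional from computing an unintended branch, and this construction is arithmetically definable in the input enumeration (hence Borel). Replacing your naive join $\bigoplus_i f(p^q_i)$ by $h_T$ applied to the sequence $\seq{g(q),g(p^q_0),g(p^q_1),\ldots}$ (where $g$ is a Borel bijection $\Cantor\to[T]$) repairs the argument; the rest of your outline---using Lusin--Novikov both to enumerate predecessors and to verify that $L_1,L_2$ are Borel---matches what the paper does.
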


\begin{restatable}{theorem}{heightthreeborel}
\label{thm:height3borel}
There is a locally countable Borel partial order of height three with no Borel embedding into Turing reducibility.
\end{restatable}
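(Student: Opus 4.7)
The plan is to mirror the $\ZF + \PSP$ proof of Theorem~\ref{thm:height3zf}, using the classical perfect set theorem for Borel sets in place of $\PSP$. I would start with the partial order $(P, \leq_P)$ furnished by Theorem~\ref{thm:obstacle1}, arranging that its construction produces $P$ and $\leq_P$ as Borel subsets of $\Cantor$ and $\Cantor \times \Cantor$, respectively, while preserving height three, size continuum, local countability, and a first level of size continuum.

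Given such a Borel $P$, suppose for contradiction that $f\colon P \to \Cantor$ is a Borel embedding into Turing reducibility. Because $P$ is locally countable, the relation $<_P$ has countable vertical sections, so by the Lusin--Novikov uniformization theorem its projection onto the first coordinate is Borel; hence the first level $L_1 \subseteq P$ is Borel. Since $L_1$ is an antichain and $f$ is an embedding, $f$ must be injective on $L_1$, so by the Lusin--Suslin theorem $f[L_1]$ is a Borel subset of $\Cantor$. It is also uncountable, since $|L_1| = \mathfrak{c}$, so the classical perfect set theorem for Borel sets yields a perfect $A \subseteq f[L_1]$. This perfect set witnesses that the image of the Turing-degree function $x \mapsto [f(x)]_T$ on $L_1$ contains a perfect set, so Theorem~\ref{thm:obstacle1} implies that $f$ cannot be an embedding, a contradiction.

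The step that will require the most care is arranging that the partial order delivered by Theorem~\ref{thm:obstacle1} is Borel. The construction there rests on a technical perfect-set result from~\cite{lutz2023part} and will presumably present $P$ and $\leq_P$ by combinatorial data indexed uniformly by reals; a direct inspection should show that the resulting sets are Borel, or at worst analytic, in which case a uniform Borel coding of the indexing data should suffice to drop the complexity. If the most natural form of the construction in Theorem~\ref{thm:obstacle1} uses transfinite recursion of length $\mathfrak{c}$ in an essential way, a parallel but more explicit construction tailored to the Borel category may be required; this is where the main work will lie. Once the Borel presentation is in hand, the remainder of the argument is a routine translation of the $\ZF + \PSP$ proof, with the Borel perfect set theorem and Lusin--Novikov/Lusin--Suslin playing the roles that $\PSP$ and the weak choice axioms played in the $\ZF$ setting.
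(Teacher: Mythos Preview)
Your proposal is correct and follows essentially the same route as the paper: build the Borel partial order $P$ from Theorem~\ref{thm:obstacle1}, show the image of its first level under any Borel embedding contains a perfect set, and invoke Theorem~\ref{thm:obstacle1} for a contradiction. Two minor points where the paper is slightly more economical: first, the partial order in Lemma~\ref{lemma:height3exists} is already presented explicitly as a Borel subset of $3^\omega$ with Borel $\leq_P$ and Borel first level, so your worry about a transfinite-recursion obstruction does not arise; second, the paper skips Lusin--Suslin and simply observes that $f(P_1)$ is analytic (as the Borel image of a Borel set) and then applies the perfect set theorem for $\mathbf{\Sigma^1_1}$ sets, which saves you the injectivity step.
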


Note that in this context, we do not need to explicitly assume that $P$ has size continuum---this follows automatically from the definition of ``Borel partial order.''

\subsection*{Acknowledgements}

Thanks to Steffen Lempp for encouraging us to write this paper and to Benny Siskind and Ted Slaman for several helpful conversations. Also thanks to Ashutosh Kumar for pointing us to the papers \cite{groszek1983independence, kumar2019suborders, kumar2021separating} and for an interesting email exchange and to Elliot Glazer for answering several questions about choiceless set theory.

\section{Embedding height two partial orders is easy}
\label{sec:height2}

In this section we will explain how to embed any height two, locally countable partial order of size continuum into the Turing degrees. As discussed in the introduction, we will first prove the theorem in $\ZFC$ and then explain how to modify the proof to work in the theory $\ZF + \LN$ and in the Borel setting.

Here's the basic strategy of the proof. Given a partial order $(P, \leq_P)$ of height two, we will construct a function $f\colon P \to \Cantor$ such that
\[
    x \leq_P y \iff f(x) \leq_T f(y).
\]
It is clear that such a function induces an embedding of $P$ into the Turing degrees, so this is sufficient. In order to construct $f$, we will first pick a perfect set of mutually generic reals. We will then map each element of the first level of $P$ to an element of this perfect set and each element of the second level of $P$ to a sufficiently generic upper bound of the images of its predecessors. There is one wrinkle in this proof: we need to ensure that even when two elements of the second level have exactly the same predecessors, they get mapped to incomparable Turing degrees.\footnote{A similar problem arises if an element of the second level has exactly one predecessor: we need to make sure it gets sent to a different Turing degree than its predecessor.} One way to handle this is to insert a unique point below each element of the second level, which is not below any other elements of the partial order. This ensures that the elements of the second level all have distinct sets of predecessors and an embedding of this new partial order yields an embedding of the original partial order by forgetting about the new elements that we added. When we give the proof in detail, we will not quite explain things in this way, but it is essentially what will happen.


\subsection{\texorpdfstring{$\ZFC$}{ZFC} case}

We will break the proof into two lemmas, the first of which tells us that we can find a perfect set of mutually generic reals to map the elements of the first level to and the second of which tells us that we can always find sufficiently generic upper bounds to map the elements of the second level to. Both lemmas are essentially folklore, though we are not aware of anywhere that they are written up in the precise form we would like to use. To state these two lemmas, we need to recall the definition of a perfect tree.

\begin{definition}
A tree $T \subseteq 2^{< \omega}$ is called a \term{perfect tree} if every node in $T$ has incomparable descendants in $T$.
\end{definition}

The name ``perfect tree'' is used because the set of infinite paths through $T$ is a perfect subset of $2^\omega$. A perfect tree $T$ can be pictured as a kind of warped version of $2^{< \omega}$: there are no dead ends and if you follow any path for long enough, you will eventually come to a place where you can choose to go left or right and remain in the tree either way. In $2^{< \omega}$ you can make this decision on every step while in an arbitrary perfect tree you may have to take many steps in between each decision. By making this picture more precise, it is possible to prove that the set of paths through a perfect tree is always homeomorphic to $2^\omega$.

\begin{fact}
If $T \subseteq 2^{< \omega}$ is a perfect tree then $[T]$ is homeomorphic to $2^\omega$ and therefore has size continuum.
\end{fact}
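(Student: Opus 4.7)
The plan is to exhibit a homeomorphism $h\colon 2^\omega \to [T]$ built from a Cantor scheme inside $T$. Call a node $\rho \in T$ a \emph{splitting node} if both $\rho\concat 0$ and $\rho\concat 1$ lie in $T$. My first observation is that above every $\tau \in T$ there is a splitting node: by hypothesis $\tau$ has two incomparable descendants in $T$, and their greatest common initial segment extends $\tau$, lies in $T$, and must have both children in $T$ since $T$ is downward closed. Iterating this argument along the ``forced'' path from $\tau$ shows that the shortest splitting node above $\tau$ is in fact unique, and moreover every splitting node above $\tau$ extends it; consequently, every infinite path through $T$ that extends $\tau$ is forced to pass through this shortest splitting node.

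Next I would construct labels $(\tau_\sigma)_{\sigma \in 2^{<\omega}}$ in $T$ by recursion: let $\tau_\emptyset$ be the shortest splitting node of $T$, and given $\tau_\sigma$, let $\tau_{\sigma\concat i}$ be the shortest splitting node extending $\tau_\sigma\concat i$ for each $i \in \{0,1\}$. An easy induction gives that $\tau_{\sigma\concat 0}$ and $\tau_{\sigma\concat 1}$ are incomparable extensions of $\tau_\sigma$ and that $|\tau_\sigma| \geq |\sigma|$. Define $h\colon 2^\omega \to [T]$ by $h(x) \defeq \bigcup_n \tau_{x \upharpoonright n}$; this is well-defined and takes values in $[T]$ since the $\tau_{x \upharpoonright n}$ form an increasing chain of nodes of $T$ with lengths tending to infinity.

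The substantive step is to verify that $h$ is a bijection. Injectivity is immediate: if $x$ and $y$ first disagree at coordinate $n$, then $h(x)$ extends $\tau_{x \upharpoonright n+1}$ while $h(y)$ extends $\tau_{y \upharpoonright n+1}$, and these are incomparable extensions of $\tau_{x \upharpoonright n}$. For surjectivity, given $y \in [T]$ I would recursively define $x \in 2^\omega$ by letting $x(n)$ be the unique $i \in \{0,1\}$ with $\tau_{x\upharpoonright n}\concat i \subseteq y$. The induction requires $y$ to extend each $\tau_{x \upharpoonright n}$, and this is precisely where the uniqueness observation from the first paragraph is used: once $y$ extends $\tau_{x \upharpoonright n}\concat i$, it is forced to extend the shortest splitting node above that node, namely $\tau_{x \upharpoonright n+1}$. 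Hence $h(x) = y$.

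Continuity of $h$ is then automatic from $|\tau_{x\upharpoonright n}| \geq n$, which implies that $h(x)\upharpoonright n$ is determined by $x \upharpoonright n$. Since $h$ is a continuous bijection from the compact space $2^\omega$ to the Hausdorff space $[T]$, it is automatically a homeomorphism, and the size statement $|[T]| = |2^\omega| = \mathfrak{c}$ follows. The main point to keep an eye on is the surjectivity step: choosing the \emph{shortest} splitting node at each stage is exactly what guarantees that every path in $[T]$ is captured by the Cantor scheme $\{\tau_\sigma : \sigma \in 2^{<\omega}\}$, rather than slipping between its branches.
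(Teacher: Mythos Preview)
Your argument is correct and is exactly the standard way to prove this fact. The paper does not give a formal proof at all: it states the result as a ``Fact'' and merely offers, in the paragraph preceding it, the informal picture of a perfect tree as a ``warped version of $2^{<\omega}$'' in which one eventually reaches a branching point along any path. Your Cantor scheme built on shortest splitting nodes is precisely the way to make that picture rigorous, so your proposal and the paper's (sketched) approach coincide.
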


Instead of stating our first lemma in terms of mutually generic reals we will state it using the notion of a Turing independent set, which was mentioned in the introduction (and which is all we really need for our application of the lemma). Actually, we will not quite use the definition of ``Turing independent set'' from the introduction, but an essentially equivalent notion which applies to sets of reals rather than sets of Turing degrees.

\begin{definition}
A set $A \subseteq 2^\omega$ of reals is called a \term{Turing independent set} if no finite subset of $A$ computes any other element of $A$---i.e.\ if $a_0, \ldots, a_n \in A$ and $b$ is any element of $A$ not equal to any $a_i$ then $a_0\oplus \ldots \oplus a_n$ does not compute $b$.
\end{definition}

We can now actually state our first lemma. The argument is due to Sacks~\cite[Theorem 3]{sacks1961suborderings}, though he did not state it in terms of perfect trees.

\begin{lemma}[Sacks]
\label{lemma:tree}
There is a perfect tree $T$ such that $[T]$ is a Turing independent set.
\end{lemma}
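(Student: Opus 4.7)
The plan is to construct $T$ by a fusion argument in which we build a decreasing sequence of perfect trees $2^{<\omega} = T_0 \supseteq T_1 \supseteq T_2 \supseteq \dotsb$ meeting one diagonalization requirement at each stage, and take $T$ to be the limit. The requirements to be met are indexed by tuples $(e, \sigma_0, \dots, \sigma_n, \tau)$ where $e \in \omega$ and $\sigma_0, \dots, \sigma_n, \tau \in 2^{<\omega}$ are pairwise incomparable; the associated requirement $R_{e,\sigma_0,\dots,\sigma_n,\tau}$ demands that for every choice of $a_0, \dots, a_n, b \in [T]$ with $\sigma_i \prec a_i$ and $\tau \prec b$, one has $\Phi_e^{a_0 \oplus \dots \oplus a_n} \neq b$. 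Since any finite tuple of distinct reals can be separated by pairwise incomparable initial segments, meeting every such requirement is enough to ensure that $[T]$ is Turing independent.

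The heart of the argument is a refinement lemma: given a perfect tree $S$, pairwise incomparable nodes $\sigma_0, \dots, \sigma_n, \tau \in S$, and a functional $\Phi_e$, there is a perfect subtree $S' \subseteq S$, still containing (extensions of) all of these nodes, in which the requirement $R_{e,\sigma_0,\dots,\sigma_n,\tau}$ holds. I would prove this by splitting into two cases. If there do not exist $a_i \in [S]$ extending $\sigma_i$ such that $\Phi_e^{a_0 \oplus \dots \oplus a_n}$ extends $\tau$ and lies in $[S]$, nothing has to be done. Otherwise, pick witnessing $a_i$'s; then by the continuity of $\Phi_e$, there exist extensions $\sigma_i' \succeq \sigma_i$ in $S$ and a position $k > |\tau|$ such that for every $a_i' \in [S]$ extending $\sigma_i'$, the value $\Phi_e^{a_0' \oplus \dots \oplus a_n'}(k)$ converges to a fixed bit $c \in \{0,1\}$. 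Because $S$ is perfect above $\tau$, I can then find an extension $\tau' \succeq \tau$ in $S$ that lies at level $\geq k$ and satisfies $\tau'(k) = 1 - c$. Replacing $S$ above each $\sigma_i$ by a perfect subtree with stem $\sigma_i'$, and above $\tau$ by a perfect subtree with stem $\tau'$, while leaving $S$ unchanged elsewhere, produces the desired $S'$.

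Given the refinement lemma, I fix an enumeration $(R_s)_{s \in \omega}$ of all requirements and build $T_{s+1}$ from $T_s$ by applying the refinement lemma to meet $R_s$ (skipping stage $s$ if the relevant nodes are no longer in $T_s$). To ensure $T \defeq \bigcap_s T_s$ is still a perfect tree, I use the standard fusion bookkeeping: I enumerate the finite antichains $F_s$ of $T_s$ of nodes at "splitting level $s$" (i.e.\ the $s$-th splitting nodes above the root), and I arrange that the refinement at stage $s$ only modifies $T_s$ strictly above level $s$ and that every node of $F_s$ has two incomparable extensions already decided by stage $s+1$. Then for each branch choice through the $F_s$'s one obtains a path in every $T_s$, hence in $T$, and distinct choices give distinct paths; this shows $T$ is perfect. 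Any path through $T$ is a path through every $T_s$, so every requirement remains satisfied in the limit.

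The main obstacle is the bookkeeping needed to ensure that the intersection $T$ is a perfect tree: the refinement step for $R_s$ prunes $T_s$ both above the $\sigma_i$'s and above $\tau$, and one must check that these prunings can be done high enough in the tree not to interfere with splittings already promised at earlier stages. This is standard in perfect set forcing / fusion arguments, but it is where the care is required; the rest of the proof is essentially a bookkeeping-free application of continuity and perfectness.
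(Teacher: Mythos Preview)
Your fusion argument is correct in spirit and would work, but it is organized dually to the paper's proof. The paper builds the tree \emph{from below} as an increasing union of finite trees $T_0 \subseteq T_1 \subseteq \dotsb$: at stage $n$ it first adds two children under every leaf (this single step guarantees perfectness outright), and then, for each nonempty subset $S$ of the current leaves, extends the leaves non-splittingly so that either $\Phi_n$ applied to the join of the $S$-branches diverges or its value at a fresh bit $N$ is forced, in which case all non-$S$ leaves are extended to disagree at bit $N$. Because the trees at each stage are finite, there is no fusion bookkeeping at all---the very thing you flag as ``the main obstacle'' simply does not arise. Your decreasing-sequence-of-perfect-trees approach is the standard Sacks-forcing packaging of the same diagonalization idea; it is perfectly valid but buys generality you do not need here at the cost of the bookkeeping you mention.

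One small imprecision in your refinement step: after fixing $k$ and $c$, you assert that perfectness of $S$ above $\tau$ lets you find $\tau' \succeq \tau$ in $S$ with $\tau'(k) = 1 - c$. Perfectness alone does not guarantee that both values of a \emph{given} bit $k$ occur above $\tau$. The fix is to choose $k$ first as the height of some splitting node $\rho \succeq \tau$ in $S$, and only then use totality of $\Phi_e^{a_0 \oplus \dots \oplus a_n}$ to find $\sigma_i' \prec a_i$ long enough to determine the output at bit $k$; then $\tau' = \rho^\frown(1-c)$ works. This is a reordering, not a new idea.
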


\begin{proof}
First consider how one might construct a perfect tree with no computable branches. To do this, we need to ensure that every branch of the tree disagrees with each total computable function in at least one place. We can accomplish this by ``growing'' the tree from the root node up in a series of stages. At each stage we have built a finite tree and we continue growing it by extending the leaf nodes (i.e.\ by adding children to the leaf nodes, children to those children, and so on). On alternate stages we can add incomparable children below every leaf node (to make sure the tree is perfect) and extend each leaf node to make sure any branch which extends it disagrees with the next total computable function (to make sure no branch is computable). Note that we do not need the tree itself to be computable so these steps are easy to carry out.

To make sure that no finite set of branches computes any other branch, we can do something similar but now instead of extending leaf nodes one at a time to make them disagree with the next computable function, we need to extend finite sets of leaf nodes at the same time to make sure the next computable function which uses those branches as an oracle disagrees with the branches of the tree which extend the other leaf nodes.

We will now describe this a bit more formally. We will form a sequence of finite subtrees of $2^{< \omega}$, $T_0 \subseteq T_1 \subseteq T_2 \subseteq \ldots$ such that $T_{n + 1}$ is an end extension of $T_n$ (every node in $T_{n + 1} \setminus T_n$ extends a leaf node of $T_n$). The final tree will be obtained as $T = \bigcup_{n \in \N} T_n$. We can start with $T_0$ as the tree just consisting of a single root node and nothing else (i.e.\ just the empty sequence).

Now we will explain how to extend $T_n$ to $T_{n + 1}$. The idea, again, is to first split every leaf node in $T_n$ and then extend all of them without splitting in order to make sure that no finite subset of them is correctly computing any of the others using the the $n^\text{th}$ Turing functional. To this end, first let $T_n^0$ be the tree formed by adding incomparable children below each leaf node of $T_n$. In other words,
\[
T_n^0 = T_n \cup \{\sigma^\frown 0 \mid \sigma \text{ is a leaf node of $T_n$}\} \cup \{\sigma^\frown 1 \mid \sigma \text{ is a leaf node of $T_n$}\}.
\]

Next, let $\Phi$ be the $n^\text{th}$ Turing functional in some standard enumeration. We will form a finite sequence of end extensions $T_n^0 \subseteq T_n^1 \subseteq \ldots \subseteq T_n^k$ (where $k$ is the number of nonempty subsets of the set of leaves of $T_n^0$) and take $T_{n + 1} = T_n^k$. In each of these extensions, we will not split any nodes. In other words, each leaf of $T_n^0$ will have at most one descendant at each level of $T_n^i$.

Suppose we have already formed $T_n^i$ and let $S$ be the $i^\text{th}$ nonempty subset of the set of leaves of $T_n^0$. We will now explain how to form $T_n^{i + 1}$. Our goal is to ensure that no set of branches extending the nodes in $S$ can compute any branch extending any other leaf node of $T_n^0$.

Since we never split any nodes in any of the previous extensions of $T_n^0$, each element of $S$ corresponds to a unique leaf of $T_n^i$. Let $\sigma_1, \ldots, \sigma_l$ denote these leaves of $T_n^i$. Let $N$ be a number larger than the height of $T_n^i$.  Now either we can find extensions $\tau_1,\ldots, \tau_l$ of $\sigma_1,\ldots,\sigma_l$ such that $\Phi^{\tau_1\oplus\ldots\oplus\tau_l}(N)$ converges or we can't find such extensions. In the former case, define $T_n^{i + 1}$ from $T_n^i$ by extending each $\sigma_j$ to $\tau_j$ and extending all other leaf nodes of $T_n^i$ to strings whose $N^\text{th}$ bit disagrees with $\Phi^{\tau_1\oplus\ldots\oplus\tau_l}(N)$. In the latter case, set $T_n^{i + 1} = T_n^i$.

Now let's check that $[T]$ is really a Turing independent set. Let $x_1,\ldots,x_l$ and $y$ be distinct elements of $[T]$ and let $\Phi$ be any Turing functional. Let $n$ be some number large enough that all of $x_1,\ldots, x_l$ and $y$ correspond to distinct leaf nodes in $T_n$ and chosen so that the $n^\text{th}$ Turing functional is equivalent to $\Phi$ (we are assuming that every computable function shows up infinitely often in whatever enumeration we are using). Suppose that $x_1,\ldots, x_l$ correspond to the $i^\text{th}$ set of leaves of $T_n^0$. Then our definition of $T_n^{i + 1}$ ensures that either $\Phi^{x_1\oplus \ldots\oplus x_l}$ disagrees with $y$ in at least one place (this corresponds to the first case in our construction above) or $\Phi^{x_1\oplus\ldots\oplus x_l}$ is not a total function (this corresponds to the second case).
\end{proof}

Our second lemma guarantees we can find sufficiently generic upper bounds to map the elements of the second level of our partial order to. The idea of the proof is originally due to Spector, who used it to show that every increasing sequence of Turing degrees has an exact pair of upper bounds (see Theorem 6.5.3 of \cite{soare2016turing}).

\begin{lemma}
\label{lemma:upperbound}
Suppose $T$ is a perfect tree such that $[T]$ is Turing independent. Then every countable subset of $[T]$ has an upper bound in the Turing degrees which does not compute any other element of $[T]$.
\end{lemma}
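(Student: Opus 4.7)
The plan is to build $y$ as a ``minimal cover'' of the $x_i$'s via a Sacks-style fusion, so that $y \geq_T x_i$ for every $i$ and, for every real $z \leq_T y$, one of two alternatives holds: either (a) $z \leq_T x_{i_1} \oplus \cdots \oplus x_{i_k}$ for some finite tuple, or (b) $y \leq_T z \oplus x_{i_1} \oplus \cdots \oplus x_{i_k}$ for some finite tuple. Once such a $y$ is in hand, the conclusion of the lemma follows from the Turing independence of $[T]$ (working without loss of generality under the assumption that the $x_i$ are pairwise distinct; the case in which $\{x_i\}$ is finite is handled by simply taking $y = x_0 \oplus \cdots \oplus x_{N-1}$). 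Indeed, if $z \in [T] \setminus \{x_i : i \in \omega\}$ satisfies $z \leq_T y$, then alternative (a) directly forces $z \in \{x_{i_1}, \ldots, x_{i_k}\}$, a contradiction; and in alternative (b), for every $j$ we have $x_j \leq_T y \leq_T z \oplus x_{i_1} \oplus \cdots \oplus x_{i_k}$, so the finite subset $\{z, x_{i_1}, \ldots, x_{i_k}\}$ of $[T]$ computes $x_j$. Turing independence then forces $x_j \in \{z, x_{i_1}, \ldots, x_{i_k}\}$ for every $j$; choosing any $j$ with $x_j \notin \{x_{i_1}, \ldots, x_{i_k}\}$ (possible since the $x_i$ form an infinite set of distinct reals) yields $z = x_j \in \{x_i\}$, again a contradiction.

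To build such a $y$, fix a partition $\omega = \bigsqcup_i C_i$ into infinite sets and work with conditions of the form $(U, n)$, where $U \subseteq 2^{<\omega}$ is a perfect tree such that every path $y \in [U]$, when restricted to the column $C_i$, equals $x_i$ for all $i < n$. I would construct a fusion sequence $(U_0, n_0) \geq (U_1, n_1) \geq \cdots$ with $n_s \to \infty$, so that $\bigcap_s [U_s] = \{y\}$. At stage $s$, which handles the Turing functional $\Phi_s$, I would apply a version of the Sacks splitting lemma---modified to respect the coding---to find a perfect subtree $U_{s+1} \subseteq U_s$ still coding $x_0, \ldots, x_{n_{s+1}-1}$ for some $n_{s+1} > n_s$, on which $\Phi_s$ is either \emph{constant} (all paths in $[U_{s+1}]$ give the same $\Phi_s^y$) or \emph{splitting} (at every branching node of $U_{s+1}$, the two extensions give incompatible $\Phi_s$-outputs). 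In the constant case, $\Phi_s^y$ is determined by $U_{s+1}$ and hence $\leq_T$ finitely many $x_i$'s, placing us in alternative (a). In the splitting case, at every branching node of $U_{s+1}$ we can recover which branch $y$ follows by checking which of the two $\Phi_s$-outputs is an initial segment of $\Phi_s^y$, so $y$ is computable from $\Phi_s^y$ together with $U_{s+1}$, placing us in alternative (b).

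The main technical hurdle is carrying out the splitting lemma while respecting the coding requirements. In the standard proof of the Sacks splitting lemma one branches freely at every node of the tree, but here the bits in committed columns $C_i$ (with $i < n$) are pinned down by the coding and cannot be used for branching. The fix is to confine all branching to bits in uncommitted columns, which always supply infinitely many free bits at every finite stage, so the usual argument adapts with only bookkeeping changes. A related subtlety is making sure that $U_{s+1}$ is constructed from $U_s$ and $\Phi_s$ in a way that leaves $U_{s+1}$ Turing-below $x_0 \oplus \cdots \oplus x_{n_{s+1}-1}$, so that in the splitting case the recovery of $y$ from $\Phi_s^y$ and $U_{s+1}$ really delivers $y \leq_T \Phi_s^y \oplus x_0 \oplus \cdots \oplus x_{n_{s+1}-1}$ (and not, for instance, a bound involving a Turing jump of the $x_i$'s).
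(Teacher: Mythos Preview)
Your approach is correct and genuinely different from the paper's. You build $y$ as a minimal upper bound for the ideal generated by the $x_i$'s via a Sacks-style fusion, and then invoke Turing independence of $[T]$ only \emph{after} the construction to rule out any $z \in [T] \setminus \{x_i\}$ below $y$; the resulting $y$ would in fact work for \emph{any} Turing independent set extending $\{x_i : i \in \omega\}$, not just the closed set $[T]$. The paper instead uses a Spector-style finite-extension construction: each column of $y$ is some $\sigma_n^\frown x_n$, and at stage $n$ one directly forces $\Phi_n^y$ to be partial, to leave $[T]$, or to equal one of the already-committed $x_0,\dots,x_k$. Turing independence (and the closedness of $[T]$) are used \emph{inside} the construction, to show that a certain bad case cannot occur. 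The trade-offs are what you would expect: the paper's argument is shorter and avoids the fusion machinery entirely---in particular the complexity issue you flag (keeping $U_{s+1} \le_T x_0 \oplus \cdots \oplus x_{n_{s+1}-1}$) simply does not arise, since there is no tree-shrinking, only finite extensions. Your approach gives a structurally stronger $y$, and the bookkeeping you worry about is standard in minimal-upper-bound constructions: the splitting/non-splitting dichotomy is decided non-effectively, but once decided, the next tree is indeed computable from the finitely many $x_i$'s committed so far (the splitting subtree is built by a terminating $\Sigma^0_1$ search relative to $U_s$, and the non-splitting case just restricts above a single node). So both routes go through; the paper's is the more economical one for this particular lemma.
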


\begin{proof}
Suppose $A$ is a countable subset of $[T]$ and $x_0, x_1, x_2, \ldots$ is an enumeration of the elements of $A$. In order to uniformly handle both the case where $A$ is finite and the case where $A$ is infinite (which will be helpful later) we allow the enumeration to contain repetitions and therefore can assume that it is always infinite.

Here's the idea of the proof. We will construct an element $y$ of $2^{\omega \times \omega}$ such that column $n$ of $y$ consists of some finite string followed by $x_n$ and this $y$ will be the upper bound we are after. It is easy to see that any such $y$ computes each element of $A$ and so the bulk of the proof consists of showing that if we choose the finite strings in a sufficiently generic way then $y$ does not compute any other element of $[T]$. The proof crucially depends on the fact that $[T]$ is Turing independent.

Formally, we will construct $y$ in a series of stages. At the end of stage $n$ we will have constructed a finite list of finite strings, $\sigma_1, \ldots, \sigma_k$ (where $k$ may not be equal to $n$) and on stage $n + 1$ we will add some more strings onto the end of this list. At the end, we will define column $i$ of $y$ to be $\sigma_i^\frown x_i$. The idea is that on stage $n + 1$ we will ensure that if we run the $n^\text{th}$ program with oracle $y$ it either does not compute any element $[T]$ or it computes one of $x_1,\ldots, x_k$.

We will now explain how to complete one step of this construction. Suppose we have just completed stage $n$ and our list of finite strings is $\sigma_0,\sigma_1,\ldots,\sigma_k$. Let us say that a finite string $\tau$ in $2^{<\omega \times < \omega}$ (i.e.\ a finite initial segment of an element of $2^{\omega\times\omega}$) \term{agrees with $y$ so far} if for each $i \leq k$, column $i$ of $\tau$ agrees with $\sigma_i^\frown x_i$. In other words, $\tau$ is a possible initial segment of $y$ given what we have built by the current stage. Let $\Phi$ denote the $n^\text{th}$ Turing functional. There are four cases to consider.

\medskip
\noindent\textbf{Case 1:} There is some finite string $\tau \in 2^{< \omega \times < \omega}$ which agrees with $y$ so far such that $\Phi^\tau$ cannot be extended to a path through $T$. In this case, extend the list $\sigma_0,\sigma_1,\ldots,\sigma_k$ to ensure that $y$ is an extension of $\tau$. This guarantees that $\Phi^y$ is not in $[T]$.

\medskip
\noindent\textbf{Case 2:} There is some finite string $\tau$ which agrees with $y$ so far and some $m \in \N$ such that for every extension $\tau'$ of $\tau$ which agrees with $y$ so far, $\Phi^{\tau'}(m)$ does not converge. In this case, extend the list $\sigma_0,\sigma_1,\ldots,\sigma_k$ to ensure that $y$ is an extension of $\tau$. This guarantees that $\Phi^y$ is not total.

\medskip
\noindent\textbf{Case 3:} For every finite string $\tau$ which agrees with $y$ so far, $\Phi^\tau$ is compatible with one of $x_0,x_1,\ldots,x_k$. In this case, do nothing; we are already guaranteed that $\Phi^y$ is either not total or is equal to one of $x_0, x_1, \ldots, x_k$.

\medskip
\noindent\textbf{Case 4:} None of the first three cases holds. We claim this case actually cannot happen. In particular, in this case we can use $x_0, x_1, \ldots, x_k$ to compute another element of $[T]$, thus violating our assumption that $[T]$ is Turing independent.

To do so, first note that because Case 3 does not hold, we can find some finite string $\tau$ which agrees with $y$ so far and such that $\Phi^\tau$ is not compatible with any of $x_0, x_1,\ldots,x_k$. Next, inductively form a sequence $\tau = \tau_0 \prec \tau_1 \prec \tau_2 \prec \ldots$ of finite strings (which all agree with $y$ so far) as follows. Given $\tau_m$, look for some extension $\tau_{m + 1}$ of $\tau_m$ which agrees with $y$ so far and such that $\Phi^{\tau_{m + 1}}(m)$ converges. Because we are not in Case 2, we will always be able to find such a string.

Let $z$ be the infinite sequence formed by the $\tau_m$. By construction, $\Phi^z$ is total. Since $z$ extends $\tau$, $\Phi^z$ is not equal to any of $x_0,\ldots,x_k$. Since Case 1 does not hold, $\Phi^{\tau_m}$ is compatible with an element of $[T]$ for each $m$. Since $[T]$ is closed, this implies that $\Phi^z$ itself is in $[T]$.

To summarize, $\Phi^z$ is an element of $[T]$ which is not equal to any of $x_0,\ldots, x_k$. The final point is that to carry out the process of choosing the $\tau_m$ described above, we only need to be able to check which finite strings agree with $y$ so far. If we know $x_0,x_1,\ldots,x_k$ then this is easy to do, so $z$ (and hence $\Phi^z$) is computable from $x_0\oplus x_1\oplus\ldots\oplus x_k$.
\end{proof}

We will now explain how to put these two lemmas together to prove Theorem~\ref{thm:height2zfc}.

\heighttwozfc*

\begin{proof}
Let $(P, \le_P)$ be a height two, locally countable partial order of size continuum. Let $P_1$ denote the first level of $P$ and let $P_2$ denote the second level. As we mentioned above, instead of directly defining a map of $P$ into the Turing degrees, we will define a map $f\colon P \to \Cantor$ such that for all $x, y \in P$,
\[
    x \leq_P y \iff f(x) \leq_T f(y).
\]


\medskip\noindent\textbf{Definition of $\bm{f}$.}
First, let $T$ be a perfect tree such that $[T]$ is Turing independent, as in Lemma \ref{lemma:tree}. Since $P$ and $[T]$ both have size continuum, we can find an injective map $g \colon P \to [T]$.

We will define $f(x)$ by cases depending on whether $x$ is in $P_1$ or $P_2$. If $x$ is in $P_1$ then we simply set $f(x) = g(x)$. If $x$ is in $P_2$ then define $f(x)$ as follows. Let $P_{\le x} = \{y \in P \mid y \le_P x\}$ be the set of (non-strict) predecessors of $x$ in $P$; note that $P_{\le x}$ includes $x$ itself. By Lemma \ref{lemma:upperbound}, we can find a real which computes every element of $g(P_{\le x})$ but which computes no other elements of $[T]$. Set $f(x)$ equal to some such real.

\medskip\noindent\textbf{$\bm{f}$ is an embedding.}
Now we need to check that $f$ is an embedding. Let $x$ and $y$ be any two distinct elements of $P$. We need to show that $x \le_P y$ if and only if $f(x) \le_T f(y)$. 

First suppose $x \leq_P y$. If $x = y$ then we are done, and if not then $x$ must be in the first level of $P$ and $y$ must be in the second level. Therefore $f(x) = g(x)$ and $f(y)$ is an upper bound in the Turing degrees for a set which includes $g(x)$, so $f(y)$ computes $f(x)$.

Now suppose that $x \nleq_P y$. We know that no matter which level $x$ is in, $f(x)$ computes $g(x)$. So to show that $f(y)$ doesn't compute $f(x)$, it is enough to show that $f(y)$ doesn't compute $g(x)$. If $y$ is in the first level of $P$ then this is guaranteed by the fact that $g(x)$ and $g(y)$ are distinct elements of the Turing independent set $[T]$ and $f(y) = g(y)$. And if $y$ is in the second level of $P$ then since $x$ is not a predecessor of $y$, our choice of $f(y)$ ensures that it cannot compute $g(x)$.
\end{proof}

The theorem we have just proved is very similar to Theorem 2.2 of the paper ``Separating Families and Order Dimension of Turing Degrees'' by Kumar and Raghavan~\cite{kumar2021separating}. That theorem states that a specific height two, locally countable partial order of size continuum---which the authors refer to as $\H_\c$---embeds into the Turing degrees.\footnote{Kumar and Raghavan~\cite{kumar2021separating} also pointed out that both $\H_\c$ and the Turing degrees have the largest order dimension among all locally countable partial orders of size continuum. On the order dimension of the Turing degrees, see also \cite{higuchi2020order}.} Obviously this theorem is implied by our Theorem~\ref{thm:height2zfc}. On the other hand, it is not too hard to show that every height two, locally countable partial order of size continuum embeds into $\H_\c$ and so the two theorems are actually equivalent.

However, there are some differences between our proof and that of Kumar and Raghavan. While we embedded the first level of the partial order as a single Turing independent perfect set and then embedded the elements of the second level essentially independently of each other, Kumar and Raghavan construct their embedding by a transfinite recursion of length continuum. To find places to embed elements of the first level of the partial order, they use the existence of a Turing independent set of size continuum and to find places for elements of the second level, they use the fact that for any countable ideal of Turing degrees there is a set of reals of size continuum, any two of which form an exact pair for the ideal. 

Kumar and Raghavan's approach has some advantages and disadvantages compared to our approach. On the one hand, since they do not rely on any specific properties of perfect sets their approach does not obviously fall prey to the obstacle presented by our Theorem~\ref{thm:obstacle1} (and there are known constructions of large Turing independent sets which do not produce perfect sets, see~\cite{kumar2023large}). On the other hand, since their approach relies on picking a well-order of size continuum, it cannot be easily adapted to work in the $\ZF$ or Borel settings and also seems more susceptible to the obstacle discovered by Groszek and Slaman (see the note~\cite{kumar2019suborders} by Kumar for more about how this problem applies to their approach). 

In the end, for all their differences, both methods run into the same problem when embedding partial orders of height three: it's not clear how to make sure the second level of the partial order is embedded as a Turing independent set.

\subsection{\texorpdfstring{$\ZF + \LN$}{ZF + LN} case}

We will now show how to modify our proof of Theorem~\ref{thm:height2zfc} to work in the theory $\ZF + \LN$. In other words, we will explain how to prove the following theorem.

\heighttwozf*

The key observation is that there is actually only one part of the proof that cannot be carried out in $\ZF$: the application of Lemma~\ref{lemma:upperbound}. 

Recall that in the proof of Theorem~\ref{thm:height2zfc}, we had a height two, locally countable partial order $(P, \leq_P)$ of size continuum with first level $P_1$ and second level $P_2$. We first picked a perfect tree $T$ such that $[T]$ is Turing independent and an injection $g\colon P \to [T]$. Up to this point in the proof, everything works in $\ZF$. Then, for each $x \in P_2$, we used Lemma~\ref{lemma:upperbound} to find an upper bound for $g(P_{\leq x})$ which does not compute any other element of $[T]$. 

The problem is that while Lemma~\ref{lemma:upperbound} itself is provable in $\ZF$, it only implies the existence of an appropriate upper bound for $g(P_{\leq x})$, but does not tell us an explicit way to choose such an upper bound. Since we need to choose an upper bound for each $x \in P_2$, mere existence is not enough and so this part of the proof seems to require the use of some form of choice. It is fairly clear that $\Unif_\R$ is enough: we can identify the elements of $P_2$ with reals and the set of appropriate upper bounds for each $g(P_{\leq x})$ with a set of reals and thus $\Unif_\R$ lets us pick an appropriate upper bound for each $g(P_{\leq x})$.

However, by examining the proof of Lemma~\ref{lemma:upperbound}, we can do slightly better. The only reason the proof of that lemma does not give us an explicit way of constructing an upper bound is because it requires choosing an enumeration of the given countable set. Thus if we could choose an enumeration of each set $g(P_{\leq x})$ then we could use the proof of Lemma~\ref{lemma:upperbound} to pick upper bounds without any further use of choice. Fortunately, choosing enumerations of countable sets of reals is exactly what $\LN$ lets us do.

\subsection{Borel case}

We will now explain how to modify the proof of Theorem~\ref{thm:height2zfc} to work in the Borel setting. In other words, how to prove the following theorem.

\heighttwoborel*

Since the changes are slightly more substantial than for the $\ZF + \LN$ case, this time we will give a formal proof. But first, we will explain what changes we need to make. There are essentially two problems to overcome in this setting.

The first problem is that, just as in the $\ZF + \LN$ case, we need to deal with the one part of the proof that uses choice. Fortunately, there is an analogue of Lusin-Novikov Choice in the Borel setting: the Lusin-Novikov Theorem.\footnote{Which is why we chose the name ``Lusin-Novikov Choice'' in the first place.} See Theorem 18.10 and Exercise 18.15 of Kechris's book~\cite{kechris1995classical} for a proof.

\begin{theorem}[Lusin-Novikov Uniformization Theorem]
Suppose $R$ is a Borel subset of $2^\omega \times 2^\omega$ with countable sections (i.e.\ for each $x \in 2^\omega$, the set $\{y \mid R(x, y)\}$ is countable). Then both of the following hold:
\begin{enumerate}
    \item The domain of $R$ (i.e.\ the set $\dom(R) = \{x \mid \exists y\, R(x, y)\}$) is Borel.
    \item There is a sequence $\seq{f_n}_{n \in \N}$ of Borel functions $f_n \colon \dom(R) \to \Cantor$ enumerating the sections of $R$. (i.e.\ for each $x \in \dom(R)$, $\{f_n(x) \mid n \in \N\} = \{y \mid R(x, y)\}$).
\end{enumerate}
\end{theorem}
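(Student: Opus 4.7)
The plan is to prove the Lusin-Novikov Uniformization Theorem via the classical combination of a topology refinement with a fiber-wise Cantor-Bendixson derivative.

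First, I would reduce to the case that $R$ is closed. Using the standard fact that any Borel set in a Polish space can be made clopen by passing to a finer Polish topology with the same Borel $\sigma$-algebra, we may assume $R$ is a closed subset of $\Cantor\times\Cantor$; a function Borel in the refined topology is Borel in the original, so no information is lost, and the section-countability hypothesis is preserved.

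Second, for closed $R$ with \emph{discrete} fibers, I would extract uniformizers directly from a basis. Let $\{V_n\}_{n \in \N}$ enumerate a clopen basis for $\Cantor$, and set
\[
    A_n = \{x \in \Cantor : |R_x \cap V_n| = 1\}, \qquad f_n(x) = \text{the unique element of } R_x \cap V_n \text{ for } x \in A_n.
\]
The crucial point is that $A_n$ is Borel and $f_n$ is a Borel partial function, which follows from Lusin's theorem on the Borel image of Borel sets with singleton sections. If every $y \in R_x$ were isolated in $R_x$, the graphs of the $f_n$ would already cover $R$.

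Third, to handle non-discrete fibers, I would iterate the Cantor-Bendixson derivative uniformly in $x$. Set $R^{(0)} = R$, $R^{(\alpha+1)} = \{(x, y) \in R^{(\alpha)} : y \text{ is a limit point of } R_x^{(\alpha)}\}$, and $R^{(\lambda)} = \bigcap_{\alpha < \lambda} R^{(\alpha)}$ at limits. Since each $R_x$ is a countable closed subset of $\Cantor$, its CB-rank is a countable ordinal, and a $\boldsymbol{\Sigma}^1_1$-boundedness argument produces a single countable $\alpha^*$ with $R^{(\alpha^*)} = \emptyset$. Each layer $R^{(\beta)} \setminus R^{(\beta+1)}$ for $\beta < \alpha^*$ has discrete fibers, so by step two it decomposes into countably many Borel partial-function graphs. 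Concatenating over all $\beta < \alpha^*$ produces the desired sequence $\seq{f_k}_{k \in \N}$; part (1) then follows because $\dom(R) = \bigcup_k \dom(f_k)$ is a countable union of Borel sets.

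The main obstacle is the derivative analysis: one must verify by transfinite induction that each $R^{(\alpha)}$ is Borel and that the map $x \mapsto \text{CB-rank}(R_x)$ is $\boldsymbol{\Sigma}^1_1$-measurable, so that $\boldsymbol{\Sigma}^1_1$-boundedness applies. Borelness rests on the observation that $(x, y) \in R^{(\alpha+1)}$ iff every basic open $V \ni y$ meets $R_x^{(\alpha)} \setminus \{y\}$ --- a condition that looks $\boldsymbol{\Sigma}^1_1$ but collapses to Borel because the sections of $R^{(\alpha)}$ remain countable at every stage.
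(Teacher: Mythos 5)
The paper does not prove this theorem---it simply cites Kechris (Theorem 18.10 and Exercise 18.15)---so the only question is whether your argument stands on its own, and it has a genuine gap centered on the interaction between Step 1 and Steps 2--3. The topology refinement in Step 1 produces a finer Polish topology on the \emph{product} $2^\omega\times 2^\omega$; it is not a product of finer topologies on the two factors (indeed, by the Louveau--Saint Raymond theory of potential complexity, a Borel set cannot in general be made closed while keeping a product topology), and it destroys the compactness of the second factor. But your Steps 2 and 3 tacitly work in the original topology: the clopen basis $\{V_n\}$ is a basis for the original $2^\omega$, and ``$y$ is a limit point of $R^{(\alpha)}_x$'' refers to the original topology on the fiber. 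In that topology the sections $R_x$ are arbitrary countable sets, not closed ones, and the termination claim fails: a countable dense-in-itself section (e.g.\ the set of eventually-zero sequences) satisfies $R_x' = R_x$, so the fiberwise Cantor--Bendixson derivative never reaches $\varnothing$ and no countable bound $\alpha^*$ exists. If instead you interpret the derivative in the refined topology, where the fibers are indeed countable and closed, then there is no single countable family of ``vertical'' sets tracing a basis on every fiber (the refined topology is not a product), and the definitions of $A_n$ and $f_n$ no longer make sense uniformly in $x$.

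There is a second, related problem: the Borelness claims are circular. The set $R\cap(2^\omega\times V_n)$ has countable sections, not singleton sections, so the Lusin--Suslin theorem on injective images does not show that $A_n=\{x : |R_x\cap V_n|=1\}$ is Borel; as written, $\{x: R_x\cap V_n\neq\varnothing\}$ is only analytic and $\{x:|R_x\cap V_n|\le 1\}$ only coanalytic. Likewise, the final remark that the condition defining $R^{(\alpha+1)}$ ``collapses to Borel because the sections remain countable'' is exactly the nontrivial content of the theorem being proved---that projections (and similar existential quantifications) of Borel sets with countable sections are Borel is part (1) of Lusin--Novikov, not a soft fact one may invoke. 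The compactness argument that would rescue these claims (projections of closed subsets of $X\times 2^\omega$ along the compact factor are closed, so for closed $R$ the sets $\{x:R_x\cap V\neq\varnothing\}$ are closed and the ``exactly one'' sets Borel) is available only for $R$ closed in the genuine product topology---precisely the structure Step 1 gives up. Your derivative scheme does correctly prove the special case of $R$ with closed sections; for the general Borel case one needs a genuinely different ingredient, e.g.\ the effective argument (every member of a countable $\Sigma^1_1(x)$ set is $\Delta^1_1(x)$, followed by a uniform enumeration of $\Delta^1_1(x)$ reals) or a reflection/Novikov-separation argument, and no idea of that kind appears in the proposal.
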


The second problem is that in our proof of Theorem~\ref{thm:height2zfc}, we defined the embedding by cases depending on whether the input was in the first level or second level of the partial order. In $\ZF$, this is not a problem, but in the Borel setting, this will only yield a Borel function if each of the two cases corresponds to a Borel subset of the partial order. However, it turns out that we can use the Lusin-Novikov Theorem to solve this problem as well. 

When giving the formal proof, it will be helpful to have an explicitly stated, refined version of Lemma~\ref{lemma:upperbound} (which guarantees the existence of sufficiently generic upper bounds). The proof of the refined version of the lemma just consists of noting that other than choosing an enumeration of the countable set, every part of the construction of the upper bound in the original proof was arithmetically definable and thus yields a Borel map from enumerations to upper bounds.

\begin{lemma}
\label{lemma:upperboundborel}
Suppose $T$ is a perfect tree such that $[T]$ is Turing independent. Then there is a Borel function $h_T \colon (2^\omega)^\omega \to 2^\omega$ such that for any sequence $\bar{x} = \langle x_n \rangle_{n \in \N}$ of elements of $[T]$, $h_T(\bar{x})$ computes all of the $x_n$'s but does not compute any other element of $[T]$.
\end{lemma}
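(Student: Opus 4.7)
The plan is to observe that the only non-effective step in the proof of Lemma~\ref{lemma:upperbound} was the choice of an enumeration of the countable set $A$; everything else is an arithmetical stage-by-stage construction. Since $h_T$ is handed such an enumeration $\bar x = \langle x_n\rangle_{n\in\N}$ as input, I would define $h_T(\bar x)$ by running that construction verbatim, using $\bar x$ in place of the chosen enumeration and $T$ as a parameter. At stage $n+1$, with the finite list $\sigma_0,\ldots,\sigma_k$ built so far, I would test Cases 1, 2, and 3 of the original proof in order against the $n^{\text{th}}$ Turing functional, always taking canonical (e.g.\ lexicographically least) witnesses whenever a choice arises; Case 4 cannot occur, by the Turing-independence argument already given in Lemma~\ref{lemma:upperbound}. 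At the end I would set $h_T(\bar x)(\langle n,m\rangle) = (\sigma_n \concat x_n)(m)$.

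Correctness---that $h_T(\bar x)$ is an upper bound for $\{x_n : n \in \N\}$ that computes no other element of $[T]$---is exactly the conclusion of Lemma~\ref{lemma:upperbound}, since the construction is the same one, just applied to a prescribed enumeration. The only new task is to verify that $h_T$ is Borel. For this I would check that each test in the case analysis is arithmetical in $\bar x$ with parameter $T$: ``$\tau$ agrees with $y$ so far'' inspects only finitely many bits of finitely many $x_i$; ``$\Phi^\tau$ cannot be extended to a path through $T$'' is $\Pi^0_1(T)$ once $\tau$ is fixed (since, $T$ being finitely branching, this reduces by König's lemma to the nonexistence of arbitrarily long nodes of $T$ consistent with the finitely many values of $\Phi^\tau$); and ``$\Phi^{\tau'}(m)$ fails to converge for every permitted extension $\tau'$'' is arithmetical in $(\bar x,T)$. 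Because the witness in each case is chosen canonically, the finite list produced at the end of stage $n$ is a uniformly arithmetical function of $(\bar x,n)$ with parameter $T$, so $h_T \colon (2^\omega)^\omega \to 2^\omega$ is Borel.

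The main obstacle here is really not mathematical but presentational. All the substantive work---finding an appropriate upper bound and ruling out Case 4---has already been carried out in Lemma~\ref{lemma:upperbound}; what remains is to state the construction carefully enough that its arithmetical (hence Borel) dependence on $\bar x$ is manifest. The cleanest way to do this is probably to simply annotate each step of the original proof with the observation that its case test and its canonical witness are arithmetical in $(\bar x,T)$, rather than to reproduce the construction in full.
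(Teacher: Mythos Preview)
Your proposal is correct and matches the paper's own argument essentially verbatim: the paper's proof is just the one-sentence observation that, apart from choosing an enumeration, every step of the Lemma~\ref{lemma:upperbound} construction is arithmetically definable, so feeding in $\bar x$ as the enumeration yields a Borel map. Your write-up simply spells this out with more care (canonical witnesses, complexity of each case test), which is fine.
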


\begin{proof}[Proof of Theorem~\ref{thm:height2borel}]
Let $(P, \le_P)$ be a height two, locally countable Borel partial order and let $P_1$ and $P_2$ denote the first and second levels of $P$, respectively. Recall that in the proof of Theorem~\ref{thm:height2zfc}, we constructed a map $f \colon P \to 2^\omega$ such that $x \le_P y$ if and only if $f(x) \le_T f(y)$. In this proof, we will simply give a definition of the function $f$ which makes it clear why it is Borel; the proof that it is an embedding is unchanged.

Let $T$ be a perfect tree such that $[T]$ is Turing independent and let $h_T$ be the function from Lemma~\ref{lemma:upperboundborel}. Let $g \colon 2^\omega \to [T]$ be a homeomorphism (though we really only need $g$ to be a Borel bijection). Now consider the binary relation $R$ on $\Cantor$ defined by
\[
    R(x, y) \iff y <_P x.
\]
Note that since $\leq_P$ is Borel, so is $R$. Furthermore, note that the domain of $R$ is exactly $P_2$ and for each $x \in P_2$, the section $\{y \mid R(x, y)\}$ is exactly the set of (strict) predecessors of $x$. Since $P$ is locally countable, the sections of $R$ are all countable. Thus we can apply the Lusin-Novikov Theorem to conclude two things:
\begin{enumerate}
    \item $P_2$ is Borel. Since $P$ itself is Borel, this implies that $P_1 = P\setminus P_2$ is Borel as well.
    \item There are Borel functions $k_n \colon P_2 \to \Cantor$ such that for each $x \in P_2$, $\{k_n(x) \mid n \in \N\}$ is equal to the set of (strict) predecessors of $x$.
\end{enumerate}
It is then straightforward to check that the following definition:
\[
    f(x) =
    \begin{cases}
        g(x) &\text{ if } x \in P_1\\
        h_T(\seq{g(x), g(k_0(x)), g(k_1(x)), \ldots}) &\text{ if } x \in P_2. 
    \end{cases}
\]
yields a Borel function.
\end{proof}

\section{Embedding height three partial orders is hard}

Theorem~\ref{thm:obstacle1} from the introduction gives a general obstacle to embedding height three partial orders in the Turing degrees. In this section, we will give a proof of that theorem and then explain how it yields unconditional non-embeddability results in $\ZF + \PSP$ and in the Borel setting. First, though, let's recall the statement of Theorem~\ref{thm:obstacle1}.

\obstacleone*

There is one thing we should explain about the statement of this theorem. The function $f$ in the statement is a function from the partial order $P$ to the set of Turing degrees. Thus the image of $f$ on the first level of $P$ is a set of Turing degrees, not a set of reals, so it cannot literally contain a perfect set.  What we really mean is that there is a perfect set of reals, $A$, such that the Turing degree of each element of $A$ is in the image of $f$ on the first level of $P$.

\subsection{Proof of Theorem~\ref{thm:obstacle1}}

The main tool we will use in the proof of Theorem~\ref{thm:obstacle1} is the following technical theorem on perfect sets, which was proved in~\cite{lutz2023part}.

\begin{theorem}[$\ZF$]
\label{thm:perfect}
Suppose that $A$ is a perfect subset of $\Cantor$, $B$ is a countable dense subset of $A$ and $b$ is a real which computes each element of $B$. Then for every $c \in \Cantor$, there are reals $d_1, d_2, d_3, d_4 \in A$ such that
\[
    b\oplus d_1 \oplus d_2 \oplus d_3 \oplus d_4 \geq_T c.
\]
\end{theorem}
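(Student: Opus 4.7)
The plan is to construct $d_1, d_2, d_3, d_4 \in A$ coding $c$ modulo $b$ by a finite-extension argument inside the perfect-tree presentation of $A$, exploiting the fact that $A$ comes equipped with an enumerable dense subset.

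First I would pass to the canonical perfect tree $T_A = \{\sigma \in 2^{<\omega} : \sigma \prec x \text{ for some } x \in A\}$, so that $A = [T_A]$. Because $B$ is dense in $A$ and $b$ uniformly computes an enumeration $\{b_n\}_{n \in \omega}$ of $B$, we have $\sigma \in T_A$ iff some $b_n$ extends $\sigma$; hence $T_A$ is $\Sigma^0_1$ in $b$, as is the set of its splitting nodes. The crucial feature is that $T_A$ is \emph{not} in general decidable from $b$: there is no uniform way to confirm that a given node is \emph{not} in $T_A$, nor that a node is \emph{not} a splitting node.

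The coding pivots on the following observation: whenever $d_i, d_j \in A$ have longest common initial segment $\sigma$, the node $\sigma$ is provably a splitting node of $T_A$, and this fact is certifiable from $b \oplus d_i \oplus d_j$ alone, requiring no negative information about $T_A$. Pairs of elements of $A$ therefore serve as \emph{witnesses} to splitting. I would then build $d_1, d_2, d_3, d_4$ by a simultaneous finite-extension construction: at stage $n$, given finite initial segments $\tau_1^n, \ldots, \tau_4^n \in T_A$, I would use the enumeration of $B$ to locate a splitting node $\sigma_n \in T_A$ extending all four (which exists by perfectness of $T_A$), and then extend the $\tau_i^n$'s through $\sigma_n$ in a pattern that simultaneously (i) makes the pairwise divergences among the $d_i$'s certify each relevant splitting node, and (ii) records the bit $c(n)$ in the direction taken at $\sigma_n$ by a designated ``data'' pair.

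Decoding given $b \oplus d_1 \oplus d_2 \oplus d_3 \oplus d_4$ then proceeds by stages: one walks along the common prefixes of the $d_i$'s, detecting the stagewise splitting nodes $\sigma_0 \prec \sigma_1 \prec \cdots$ as they are witnessed by pairwise divergence, and reading off $c(n)$ from the direction the data reals take at $\sigma_n$. The hard part will be the bookkeeping: arranging the construction so that at no point does the decoder need to rule out a candidate node being in $T_A$ or being a splitting node, since such rulings are $\Pi^0_1(b)$ and unavailable. This is ultimately why four reals are required rather than two: the richer combinatorics of four-fold divergence patterns supply enough positive witnesses at each stage to simultaneously certify the splitting structure and carry one bit of $c$, while keeping every step of the decoding $\Sigma^0_1$ in $b \oplus d_1 \oplus d_2 \oplus d_3 \oplus d_4$ and thus computable from that join.
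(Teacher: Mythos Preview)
The paper does not prove this theorem; it is quoted as a black box from \cite{lutz2023part}, so there is no in-paper argument to compare your proposal against.

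On the merits: your opening moves are correct and are indeed the natural first steps. Passing to the tree $T_A$, observing that $T_A$ and its set of splitting nodes are $\Sigma^0_1(b)$ via the dense enumerable subset $B$, and noting that the longest common prefix of two elements of $A$ is a splitting node certifiable positively from the oracle --- these are exactly the right pieces of leverage, and the overall shape (finite-extension construction of the $d_i$ inside $T_A$, with pairwise divergences doing double duty as split-certificates and data bits) matches the spirit of the argument in \cite{lutz2023part}.

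However, your stage description contains a concrete error. You write that at stage $n$ you ``locate a splitting node $\sigma_n \in T_A$ extending all four'' of $\tau_1^n,\ldots,\tau_4^n$. Once any two of the $d_i$ have diverged at an earlier stage --- and they must, since divergence is your only mechanism for certifying splits and recording bits --- no node of $T_A$ extends both, let alone all four. So the construction as written cannot proceed past stage $0$. The actual scheme must track the four current positions separately and, at each stage, designate specific pairs to split while the others continue along prescribed branches; the ``richer combinatorics of four-fold divergence patterns'' you invoke is precisely the question of \emph{which} pairs split at \emph{which} stages so that the decoder can reconstruct the sequence of coding nodes in order without ever needing $\Pi^0_1(b)$ information. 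You correctly flag this as the hard part and leave it unresolved. As it stands, then, the proposal is a sound diagnosis of the available ingredients together with an inconsistent first attempt at assembling them, rather than a proof sketch that could be completed by routine bookkeeping.
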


To prove Theorem~\ref{thm:obstacle1}, we will assume that we have an arbitrary partial order $(P, \leq_P)$ with the properties listed in the theorem statement, along with an embedding $f$ of $P$ into the Turing degrees. We will then assume that the image of $f$ on the first level of $P$ contains a perfect set and try to use Theorem~\ref{thm:perfect} to derive a contradiction; the main idea is that Theorem~\ref{thm:perfect} puts certain constraints on what configurations of points can be realized in the Turing degrees, but an arbitrary partial order does not necessarily have these constraints. In order to make the proof work, we will make some assumptions about the structure of $P$. We will keep track of these assumptions and, at the end of the proof, check that there is a partial order which satisfies all of them.

To begin, let $(P, \leq_P)$ be a partial order of height three and let $P_1, P_2$ and $P_3$ denote the first, second and third levels of $P$, respectively. Implicit in the statement of Theorem~\ref{thm:obstacle1} we already have some assumptions about $P$.
\begin{enumerate}[start = 1, label=(A\arabic*), leftmargin=43pt]
    \item $P$ is locally countable, size continuum and height three.
    \item $P_1$ has size continuum.
\end{enumerate}
Next, let $f$ be an embedding of $P$ into the Turing degrees and assume there is a perfect set $A \subseteq \Cantor$ such that every element of $A$ has Turing degree in $f(P_1)$. We will now attempt to derive a contradiction by finding a configuration of points in $P$ which $f$ cannot take to an isomorphic configuration in the Turing degrees. We will break our construction into a sequence of steps. To follow the construction, it may help to refer to Figure~\ref{fig:config}, which depicts the final configuration.

\begin{figure}
    \centering
    \begin{tikzpicture}
        \draw (1,0) ellipse (3cm and 0.5cm) node[left=100] (P1) {$P_1$};
        \draw (1,1.1) ellipse (3cm and 0.4cm) node[left=100] (P2) {$P_2$};
        \draw (1,2.1) ellipse (3cm and 0.4cm) node[left=100] (P3) {$P_3$};
        \draw (9,0) ellipse (3cm and 0.5cm) node[right=90] (A) {$A$};

        \draw[->] (4.5,1) -- (6, 1) node[above, midway] (f) {$f$};

        \draw (7.7, 0) ellipse (1.3cm and 0.25cm) node[below=3, left=15] (B) {};
        \node (Blabel) at (6, -0.75) {$B$};
        \draw (Blabel) -- (B);
        \draw (-0.3, 0) ellipse (1.3cm and 0.25cm) node[below=3, left=6] (fB) {};
        \node (fBlabel) at (-1.5, -0.85) {$f^{-1}(B)$};
        \draw (fBlabel) -- (fB);
        \foreach \i in {0,1,...,3}
        {   \node[black, circle, fill, inner sep=1pt] (p\i) at (\i/3 - 1.2, 0) {};
            \node[black, circle, fill, inner sep=1pt] (b\i) at (\i/3 + 6.7, 0) {};  }
        \node (petc) at (0.2, 0) {$\ldots$};
        \node (betc) at (8.1, 0) {$\ldots$};

        \node[label=left:$x$, black, circle, fill, inner sep=1pt] (p) at (-0.8, 1.1) {};
        \node[label=left:$b$, black, circle, fill, inner sep=1pt] (b) at (7.2, 1.1) {};
        \node[label=right:$y$, black, circle, fill, inner sep=1pt] (q) at (2.9, 1.1) {};
        \node[label=left:$w$, black, circle, fill, inner sep=1pt] (s) at (0.4, 2.1) {};
        \node[label=right:$c$, black, circle, fill, inner sep=1pt] (c) at (10.9, 1.1) {};
        \node[label=left:$e$, black, circle, fill, inner sep=1pt] (e) at (8.4, 2.1) {};
        \foreach \i in {1,2,...,4}
        {   \node[label=below:$z_{\i}$, black, circle, fill, inner sep=1pt] (r\i) at (\i*0.4 + 1, 0.2) {};
            \node[label=below:$d_{\i}$, black, circle, fill, inner sep=1pt] (d\i) at (\i*0.4 + 9, 0.2) {}; }

        \foreach \i in {0,1,...,3}
        {   \draw (p) -- (p\i);
            \draw (b) -- (b\i); }
        \draw (s) -- (p);
        \draw (e) -- (b);
        \draw[dashed] (e) -- (c);
        \foreach \i in {1,2,...,4}
        {   \draw (s) -- (r\i);
            \draw (e) -- (d\i); }
    \end{tikzpicture}
    \caption{The sets and points chosen during the proof of Theorem~\ref{thm:obstacle1} and their relationships to each other.}
    \label{fig:config}
\end{figure}
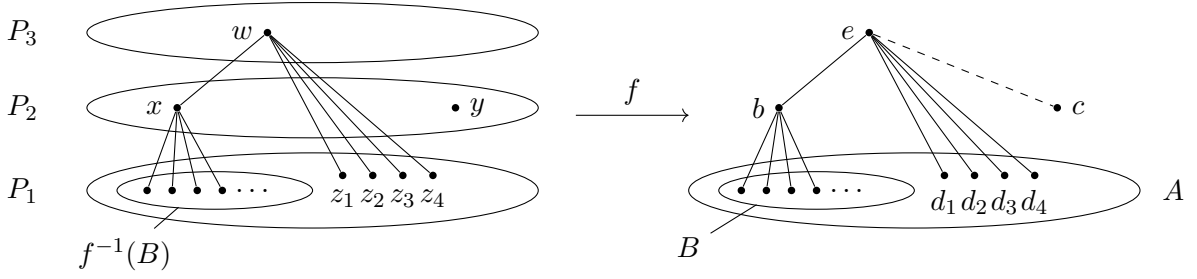

\medskip\noindent\textbf{Step 1: Pick a countable dense set.} Let $B$ be a countable, dense subset of $A$. Let $f^{-1}(B)$ denote the subset of $P$ consisting of points which are in the preimage of the set of Turing degrees of reals in $B$. Note that $f^{-1}(B)$ is a countable subset of $P_1$.

\medskip\noindent\textbf{Step 2: Pick an upper bound for \boldmath$B$.} Let $x$ be an element of $P_2$ which is an upper bound for $f^{-1}(B)$. To ensure that such an element exists, we will make another assumption about $P$.
\begin{enumerate}[resume*]
    \item Every countable subset of $P_1$ has an upper bound in $P_2$.
\end{enumerate}
Now let $b$ be any real whose Turing degree is $f(x)$. Note that since $f$ is an embedding of $P$ into the Turing degrees, $b$ computes each element of $B$.

\medskip\noindent\textbf{Step 3: Pick an independent element of \boldmath$P$.} Let $y$ be an element of $P_2$ which is not equal to $x$. Note that the existence of such an element follows from the assumptions we have made so far: by (A3), each countable subset of $P_1$ has an upper bound in $P_2$, and since $P$ is locally countable and $P_1$ is uncountable, no single element can be an upper bound for all of these countable subsets. Let $c$ be a real whose Turing degree is $f(y)$.

\medskip\noindent\textbf{Step 4: Apply Theorem~\ref{thm:perfect}.} We are now in position to apply Theorem~\ref{thm:perfect}. By that theorem, there are reals $d_1, d_2, d_3, d_4$ in $A$ such that $b\oplus d_1\oplus d_2\oplus d_3\oplus d_4 \geq_T c$. Since $d_1,\ldots,d_4$ are in $A$, their Turing degrees are in $f(P_1)$. Thus there are elements $z_1,z_2,z_3,z_4 \in P_1$ such that for each $i$, $f(z_i)$ is the Turing degree of $d_i$.

\medskip\noindent\textbf{Step 5: Reach a contradiction.} Let $w$ be an element of $P_3$ such that $w$ is above $x$ and $z_1,z_2,z_3,z_4$ but \emph{not} above $y$. To ensure that such an element exists, we will make our final assumption about $P$.
\begin{enumerate}[resume*]
    \item For every finite subset $Q$ of $P_1\cup P_2$ and every element $q \in P_2$ which is not equal to any element of $Q$, there is an element of $P_3$ which is above every element of $Q$, but not above $q$.
\end{enumerate}
Let $e$ be a real whose Turing degree is $f(w)$. Since $f$ is an embedding, $e$ computes $b$ and each of $d_1,d_2,d_3,d_4$. Hence we have
\[
    e \geq_T b \oplus d_1\oplus d_2\oplus d_3\oplus d_4 \geq_T c.
\]
However, this contradicts the fact that $w \ngeq_P y$, which, since $f$ is an embedding, should imply that $e \ngeq_T c$.

To complete the proof, we just have to show that there is a partial order $P$ which satisfies the assumptions listed above.

\begin{lemma}
\label{lemma:height3exists}
There is a partial order $(P, \leq_P)$ such that
\begin{enumerate}
    \item $P$ is locally countable, size continuum and height three.
    \item $P_1$ has size continuum.
    \item Every countable subset of $P_1$ has an upper bound in $P_2$.
    \item For every finite subset $Q$ of $P_1\cup P_2$ and every element $q \in P_2$ which is not equal to any element of $Q$, there is an element of $P_3$ which is above every element of $Q$, but not above $q$.
\end{enumerate}
Where $P_1, P_2$, and $P_3$ denote the first, second and third levels of $P$, respectively.
\end{lemma}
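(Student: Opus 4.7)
The plan is to realize $P$ as a three-level hierarchy in which levels $2$ and $3$ consist of (nonempty) countable subsets of the previous level, with the order given essentially by membership. Specifically, I would take $P_1 := \Cantor$, $P_2 := \{S \subseteq P_1 : 1 \leq |S| \leq \aleph_0\}$, and $P_3 := \{W \subseteq P_2 : 1 \leq |W| \leq \aleph_0\}$. The order $<_P$ is given by three cases: $a <_P b$ holds iff either ($a \in P_1$, $b \in P_2$, and $a \in b$), or ($a \in P_2$, $b \in P_3$, and $a \in b$), or ($a \in P_1$, $b \in P_3$, and $a \in y$ for some $y \in b$). Transitivity and irreflexivity are immediate, and the sets $P_1, P_2, P_3$ are exactly the three levels of the resulting partial order.

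Properties (1)--(3) then fall out of the construction. Since $\c^{\aleph_0} = \c$, each $P_i$ has size $\c$, giving $|P| = \c$ and in particular property (2). All chains have length at most three by construction, so $P$ has height three. Local countability is clear: an element $S \in P_2$ has as predecessors exactly the members of $S$, which is countable; an element $W \in P_3$ has predecessor set $W \cup \bigcup W$, a countable union of countable sets, hence countable. For (3), any countable $S \subseteq P_1$ is itself an element of $P_2$, and lies above every member of $S$.

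The main obstacle is verifying property (4). Given a finite $Q \subseteq P_1 \cup P_2$, I would split it as $Q \cap P_1 = \{x_1, \ldots, x_m\}$ and $Q \cap P_2 = \{y_1, \ldots, y_n\}$, and let $q \in P_2 \setminus Q$. Because $q$ is countable while $P_1 = \Cantor$ is uncountable, I can pick some $z \in P_1 \setminus q$ and then set $y_i^* := \{x_i, z\} \in P_2$; the key point is that $y_i^* \neq q$, since $z \in y_i^* \setminus q$. Taking $W := \{y_1, \ldots, y_n, y_1^*, \ldots, y_m^*\} \in P_3$ then gives an element above every $y_j \in Q$ (because $y_j \in W$) and above every $x_i \in Q$ (witnessed by $y_i^* \in W$), while $q \notin W$ forces $W \not\geq_P q$. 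The only subtlety here is that the naive choice $y_i^* = \{x_i\}$ could coincide with $q$; inserting the auxiliary point $z$ dodges this, using precisely the fact that $q$ is countable.
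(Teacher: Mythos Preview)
Your construction is correct and is precisely the ``free'' locally countable order of height three that the paper itself says it has in mind. The paper's only departure is to realize everything inside $3^\omega$, coding the countable sets as \emph{sequences} rather than as literal sets; this is done so that $P$ and $\leq_P$ are visibly Borel (needed later for Theorem~\ref{thm:height3borel}) and so that local countability of a level-$3$ element is witnessed by an explicit $\omega\times\omega$-indexed enumeration rather than by a countable-union-of-countable-sets argument (needed for the $\ZF$ application, Theorem~\ref{thm:height3zf}). One small wrinkle in your verification of (4): when $Q=\emptyset$ your $W$ is empty and hence not in $P_3$; tossing the singleton $\{z\}$ into $W$ (which differs from $q$ since $z\notin q$) repairs this.
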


\begin{proof}
Essentially, we can take the ``free'' locally countable order of height three and size continuum. The construction is not hard, and can be explained very succinctly, but in order to make it clear that the resulting partial order is Borel (which is necessary to prove Theorem~\ref{thm:height3borel} below), we will give a more involved definition. It will help to first fix some notation.

We will construct $P$ as a subset of $3^\omega$. Given $x \in 3^\omega$, let $\head(x)$ denote the first digit of $x$ and let $\tail(x)$ denote the element of $3^\omega$ given by deleting the first digit of $x$. It will be useful to view some elements of $P$ as coding a countable sequence of elements of $3^\omega$. Given $x \in 3^\omega$, we will use $x_n$ to denote the $n^\text{th}$ element of the sequence coded by $\tail(x)$. 

We will now define the partial order $(P, \leq_P)$. First, we define the three levels of $P$ as follows.
\begin{align*}
    P_1 &= \{x \in 3^\omega \mid \head(x) = 0\}\\
    P_2 &= \{x \in 3^\omega \mid \head(x) = 1 \text{ and for some $n$, } x_n \in P_1\}\\
    P_3 &= \{x \in 3^\omega \mid \head(x) = 2 \text{ and for some $n$, } x_n \in P_2\}
\end{align*}
and we let $P = P_1\cup P_2\cup P_3$. Next, we define the order $\leq_P$ on $P$. For $x, y \in P$, set $x \leq_P y$ if and only if one of the following conditions holds.
\begin{enumerate}
    \item $x = y$.
    \item $x$ is in a lower level than $y$ and for some $n$, $x = y_n$.
    \item $x$ is in $P_1$, $y$ is in $P_3$ and there are some $n, m$ such that $y_n \in P_2$ and $x = (y_n)_m$.
\end{enumerate}
Note that the last condition is needed to make $\leq_P$ transitive.

It is straightforward to check that $P$ is size continuum, locally countable and has height three and that $P_1$ also has size continuum. It is also straightforward to check that $P_1, P_2,$ and $P_3$ are indeed the first, second and third levels of $P$, respectively. To verify the other two properties required of $P$, we will check the following more general property, which implies both of them:
\begin{quote}
    If $Q$ is a countable, downwards-closed subset of $P$ such that the maximum level of any element of $Q$ is $i \leq 2$ then there is an element of $P_{i + 1}$ whose set of strict predecessors is exactly $Q$.
\end{quote}
To prove this, simply note that we can take the element $x \in P$ such that $\head(x) = i + 1$ and $\tail(x)$ codes a countable sequence enumerating the elements of $Q$.
\end{proof}

\subsection{\texorpdfstring{$\ZF + \PSP$}{ZF + PSP} case}

We will now see how to use Theorem~\ref{thm:obstacle1} to prove Theorem~\ref{thm:height3zf}.

\heightthreezf*

The first key point is that in $\ZF + \PSP$ we can use the perfect set property to show that the second condition in Theorem~\ref{thm:obstacle1} is always satisfied. The second key point is that the proof of Theorem~\ref{thm:obstacle1} still works in $\ZF$. To see this, it suffices to note the following.
\begin{enumerate}
    \item Theorem~\ref{thm:perfect} is provable in $\ZF$.
    \item The construction of the partial order $P$ in Lemma~\ref{lemma:height3exists} still works in $\ZF$ and $P$ still has the claimed properties.
    \item It is provable in $\ZF$ that every perfect subset of $\Cantor$ has a countable, dense subset and all other choices made during the proof of Theorem~\ref{thm:obstacle1} only involve instantiating a finite number of existential quantifiers.
\end{enumerate}
We can now prove Theorem~\ref{thm:height3zf}. Let $(P, \leq_P)$ be the partial order in the statement of Theorem~\ref{thm:obstacle1}. Suppose for contradiction that $f$ is an embedding of $P$ into the Turing degrees. Let $P_1$ denote the first level of $P$ and let $A$ denote the set of reals whose Turing degree is in $f(P_1)$. Note that since $P_1$ is uncountable, so is $A$. Now recall that the axiom $\PSP$ states that every uncountable set of reals contains a perfect set. Since we are working in $\ZF + \PSP$, the set $A$ must contain a perfect set. By our choice of $P$ (from the statement of Theorem~\ref{thm:obstacle1}), this implies that $f$ is not an embedding, which is a contradiction.

\subsection{Borel case}

We will now use Theorem~\ref{thm:obstacle1} to prove Theorem~\ref{thm:height3borel}.

\heightthreeborel*

The proof is similar to the $\ZF + \PSP$ case; the key point is that we can replace the axiom $\PSP$ with the perfect set theorem for analytic sets.

\begin{theorem}[Perfect set theorem for analytic sets; \cite{kechris1995classical} Exercise 14.13]
Every $\mathbf{\Sigma^1_1}$ subset of $2^\omega$ is either countable or contains a perfect set.
\end{theorem}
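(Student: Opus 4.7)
My plan is to apply the classical Cantor–Bendixson analysis for analytic sets through their tree representation. The starting point is that every analytic $A \subseteq 2^\omega$ can be written as $A = p[T]$ for some tree $T$ on $2 \times \omega$, meaning $A = \{x \in 2^\omega : \exists y \in \omega^\omega,\ (x,y) \in [T]\}$. For each node $(s,t) \in T$ with $|s| = |t|$, I would consider the localized slice
\[
    A(s,t) = \{x \in A : s \subseteq x \text{ and } \exists y \supseteq t \text{ with } (x,y) \in [T]\}.
\]
The entire plan revolves around isolating the ``small'' part of $A$, i.e.\ the points that sit in some $A(s,t)$ which is already countable.

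Call $(s,t) \in T$ \emph{small} if $A(s,t)$ is countable, and let $U$ be the union of all small $A(s,t)$'s. Because $T$ has only countably many nodes and each small slice is countable, $U$ is countable. If $A \setminus U = \emptyset$ then $A = U$ is countable and we are done, so assume $A \setminus U$ is uncountable. The crucial observation is that for any $x \in A \setminus U$, every node $(s,t) \in T$ with $s \subseteq x$ and $t$ an initial segment of some witness $y$ to $x \in A$ has $A(s,t)$ uncountable, since otherwise $x$ would already belong to $U$.

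Using this, I would build a Cantor scheme $\{(s_\sigma, t_\sigma)\}_{\sigma \in 2^{<\omega}}$ in $T$ with each $A(s_\sigma, t_\sigma)$ uncountable, with $(s_{\sigma i}, t_{\sigma i})$ extending $(s_\sigma, t_\sigma)$ in $T$ for $i \in \{0,1\}$, and with $s_{\sigma 0}$ and $s_{\sigma 1}$ incomparable. The splitting step at stage $\sigma$ is the heart of the argument: pick two distinct points $x_0, x_1 \in A(s_\sigma, t_\sigma) \setminus U$ (possible because $A(s_\sigma, t_\sigma)$ is uncountable and $U$ is countable), choose witnesses $y_0, y_1 \in \omega^\omega$ with $(x_i, y_i) \in [T]$, then take $n$ large enough that $x_0 \upharpoonright n \neq x_1 \upharpoonright n$, and set $(s_{\sigma i}, t_{\sigma i}) = (x_i \upharpoonright n,\ y_i \upharpoonright n)$. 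These extensions lie in $T$, split on the first coordinate, and remain non-small precisely because $x_i \notin U$.

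Finally, for each $z \in 2^\omega$, the nested sequence of strings $s_{z \upharpoonright n}$ has lengths tending to infinity, so it determines a unique element $\phi(z) = \bigcup_n s_{z \upharpoonright n} \in 2^\omega$; likewise $\bigcup_n t_{z \upharpoonright n}$ is in $\omega^\omega$, and closedness of $[T]$ puts the pair into $[T]$, so $\phi(z) \in A$. The incomparability of $s_{\sigma 0}$ and $s_{\sigma 1}$ at every stage makes $\phi$ a continuous injection from $2^\omega$ into $A$, and since $2^\omega$ is compact the image $\phi(2^\omega)$ is a perfect subset of $A$. The step I expect to require the most care is verifying that the ``non-small'' property is preserved under the splitting, i.e.\ that extensions chosen along points outside $U$ automatically continue to have uncountable slices; everything else is bookkeeping.
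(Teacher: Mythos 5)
Your argument is correct and is essentially the standard Cantor-scheme proof via the tree representation $A = p[T]$, i.e.\ the very argument intended by the paper's citation (Kechris, Exercise 14.13) --- the paper itself states the theorem without proof. The only point to make explicit is that the witnesses $y_0, y_1$ must be chosen extending $t_\sigma$ (which membership in $A(s_\sigma, t_\sigma)$ guarantees is possible), so that $(s_{\sigma i}, t_{\sigma i}) = (x_i \upharpoonright n,\, y_i \upharpoonright n)$ genuinely extends $(s_\sigma, t_\sigma)$ in $T$; with that reading, the non-smallness of the new nodes follows exactly as you say, since $x_i \notin U$.
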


There are two other key points. First, the partial order $P$ we constructed in the proof of Lemma~\ref{lemma:height3exists} is Borel and second, $P_1$, the first level of $P$, is also Borel. These can both be seen by inspecting the proof. 

Granting this, we can prove Theorem~\ref{thm:height3borel} as follows. Let $f\colon P \to \Cantor$ be a Borel function and assume for contradiction that for all $x, y \in P$,
\[
    x \leq_P y \iff f(x) \leq_T f(y).
\]
In other words, $f$ induces an embedding of $P$ into the Turing degrees, which we will denote $\widetilde{f}$ (i.e.\ $\widetilde{f}(x)$ is the Turing degree of $f(x)$). Since $P_1$ and $f$ are both Borel, $f(P_1)$ is analytic. And since $P_1$ is uncountable and $f$ is injective, $f(P_1)$ is uncountable. Hence by the perfect set theorem for analytic sets, $f(P_1)$ contains a perfect set. Thus $\widetilde{f}$ satisfies the hypothesis of Theorem~\ref{thm:obstacle1} and so it is not an embedding, contradicting our assumption.

\subsection{Other nonembedding results}

Using the same techniques we used to prove the main theorems of this section, we can prove a few other related results. Since the proofs do not contain any new ideas, we will keep them brief. We will begin with Theorem~\ref{thm:obstacle2} from the introduction.

\obstacletwo*

\begin{proof}
We can divide $A'$ into two disjoint perfect sets, $A_0'$ and $A_1'$, such that $B\cap A_0'$ is a countable, dense subset of $A_0'$ (for example, we can take $A_0'$ to be the intersection of $A'$ with some basic open neighborhood of $\Cantor$). Let $y$ be any element of $A_1'\setminus B$. We will show that $x$, together with a finite number of elements of $A_0'\setminus B$, computes $y$, which is enough to show that $(A\setminus B)\cup \{x\}$ is not Turing independent.

By Theorem~\ref{thm:perfect}, we can find reals $z_1,z_2,z_3, z_4$ in $A_0'$ such that $x\oplus z_1\oplus z_2\oplus z_3\oplus z_4$ computes $y$. This is almost enough, except that some of the $z_i$'s could be in $B$. However, if they are then $x$ computes them by assumption, so we can leave them out.
\end{proof}

We will also give an example of another nonembedding result that can be proved using our techniques. Recall that a subset $Q$ of a partial order $(P, \leq_P)$ is \term{countably directed} if every countable subset of $Q$ has an upper bound in $Q$.

\begin{theorem}[$\ZF + \PSP$]
Suppose $P$ is a locally countable partial order, $Q$ is an uncountable, countably directed subset of $P$ and $x$ is an element of $P$ which is not below any element of $Q$. Then $P$ cannot be embedded into the Turing degrees.
\end{theorem}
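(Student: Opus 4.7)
The plan is to follow the structure of the proof of Theorem~\ref{thm:height3zf} almost verbatim, with $Q$ taking the role of the first level of the auxiliary partial order and countable directedness of $Q$ supplying the upper bounds that were previously provided by assumptions (A3) and (A4). Assume for contradiction that $f \colon P \to \text{Turing degrees}$ is an embedding. Since $f$ restricted to $Q$ is injective and $Q$ is uncountable, the set $A$ of reals whose Turing degree lies in $f(Q)$ is uncountable; $\PSP$ then yields a perfect subset $A' \subseteq A$, and I pick a countable dense $B \subseteq A'$.

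For each $b \in B$, since the Turing degree of $b$ lies in $f(Q)$ and $f$ restricted to $Q$ is injective, there is a unique $q_b \in Q$ with $f(q_b)$ equal to the Turing degree of $b$; let $B^* = \{q_b : b \in B\}$, a countable subset of $Q$. By countable directedness, fix an upper bound $q^* \in Q$ of $B^*$, and let $b^*$ be any real of Turing degree $f(q^*)$. Since $f$ is an embedding, $b^* \geq_T b$ for every $b \in B$. Now let $c$ be a real of Turing degree $f(x)$ and apply Theorem~\ref{thm:perfect} to $A'$, $B$, $b^*$, and $c$ to obtain reals $d_1, d_2, d_3, d_4 \in A'$ with
\[
    b^* \oplus d_1 \oplus d_2 \oplus d_3 \oplus d_4 \geq_T c.
\]
Each Turing degree $\deg(d_i)$ lies in $f(Q)$, so let $q_i \in Q$ be the unique element with $f(q_i) = \deg(d_i)$. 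Applying countable directedness once more to the finite set $\{q^*, q_1, q_2, q_3, q_4\}$, I obtain an upper bound $w \in Q$. Then $f(w)$ dominates $f(q^*)$ and each $f(q_i)$, so $f(w)$ computes $b^*$ and each $d_i$; hence $f(w) \geq_T c = f(x)$. Since $f$ is an embedding this gives $x \leq_P w$, contradicting the hypothesis that $x$ is not below any element of $Q$.

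I do not foresee any serious obstacle: $\PSP$ and the existence of a countable dense subset of a perfect set are both provable in $\ZF$; the choices of $q_b$ and $q_i$ are canonical because $f$ is injective on $Q$; and only finitely many existential instantiations (the reals $b^*$, $c$, $d_1,\ldots,d_4$, and the two upper bounds in $Q$) are needed. Conceptually, the theorem simply isolates the combinatorial feature of the partial order from Lemma~\ref{lemma:height3exists} that was actually used in the proof of Theorem~\ref{thm:height3zf}: the existence of a large countably directed subset together with an element not bounded by it.
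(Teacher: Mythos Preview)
Your proof is correct and follows essentially the same approach as the paper's: both use $\PSP$ to extract a perfect set from the reals representing $f(Q)$, pick a countable dense subset, use countable directedness of $Q$ to find an upper bound playing the role of $b$, apply Theorem~\ref{thm:perfect}, and then use countable directedness once more to produce $w \in Q$ with $f(w) \geq_T f(x)$. Your version is in fact slightly more explicit than the paper's in tracking how the preimages $q_b$ and $q_i$ are obtained canonically from the injectivity of $f$.
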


\begin{proof}
Suppose for contradiction that $f$ is an embedding of $P$ into the Turing degrees. Since $Q$ is uncountable, so is $f(Q)$. Thus by $\PSP$, the set of reals whose Turing degree is in $f(Q)$ must contain a perfect set, $A$. Let $B$ be a countable, dense subset of $A$. Let $y$ be an element of $Q$ such that $f(y)$ is above the Turing degree of each element of $B$ (such a $y$ exists because $Q$ is countably directed). Let $b$ be a real whose Turing degree is $f(y)$ and note that $b$ computes every element of $B$. Let $c$ be a real whose Turing degree is $f(x)$. By Theorem~\ref{thm:perfect}, we can find reals $d_1,d_2,d_3,d_4$ in $A$ such that $b\oplus d_1\oplus d_2\oplus d_3\oplus d_4 \geq_T c$. Since the $d_i$ are all in $A$, we can find $z_1, z_2, z_3, z_4 \in Q$ such that $f(z_i)$ is the Turing degree of $d_i$. Again using the fact that $Q$ is countably directed, we can find an upper bound $w$ in $Q$ for $y, z_1,\ldots,z_4$. Thus $f(w) \geq_T f(x)$, but since $f$ is an embedding, this contradicts the fact that $x$ is not below any element of $Q$.
\end{proof}

Essentially the same proof can be used to prove the Borel version of this result (note the extra assumption that $Q$ is Borel, which is needed to apply the perfect set theorem).

\begin{theorem}
Suppose $P$ is a locally countable Borel partial order of size continuum, $Q$ is an uncountable, countably directed Borel subset of $P$ and $x$ is an element of $P$ which is not below any element of $Q$. Then $P$ has no Borel embedding into Turing reducibility.
\end{theorem}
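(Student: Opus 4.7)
The plan is to adapt the $\ZF + \PSP$ proof to the Borel setting by replacing $\PSP$ with the perfect set theorem for analytic sets, while verifying that the relevant image set is analytic. Suppose for contradiction that $f \colon P \to \Cantor$ is a Borel function witnessing a Borel embedding of $P$ into Turing reducibility. Antisymmetry of $\leq_P$ implies $f$ is injective on all of $P$. Since $Q$ is Borel and $f$ is Borel, the image $f(Q)$ is a $\bm{\Sigma}^1_1$ subset of $\Cantor$, and since $Q$ is uncountable and $f$ is injective, $f(Q)$ is uncountable. By the perfect set theorem for analytic sets, $f(Q)$ contains a perfect set $A$.

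Now repeat the argument from the $\ZF + \PSP$ case, using countable directedness of $Q$ in place of appeals to choice. First, pick a countable dense subset $B \subseteq A$ and let $Q_B = f^{-1}(B) \cap Q$, which is a countable subset of $Q$. Using that $Q$ is countably directed, choose $y \in Q$ which is a $\leq_P$-upper bound for $Q_B$. Because $f$ is an embedding, $b \defeq f(y)$ computes every element of $B$. Let $c \defeq f(x)$ and apply Theorem~\ref{thm:perfect} to obtain reals $d_1, d_2, d_3, d_4 \in A$ with
\[
    b \oplus d_1 \oplus d_2 \oplus d_3 \oplus d_4 \geq_T c.
\]
Since $A \subseteq f(Q)$, for each $i$ we can pick $z_i \in Q$ with $f(z_i) = d_i$. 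Using countable directedness of $Q$ once more, choose $w \in Q$ that is a $\leq_P$-upper bound for $\{y, z_1, z_2, z_3, z_4\}$. Then $f(w)$ computes $b$ and each $d_i$, hence computes $c = f(x)$, so that $f(x) \leq_T f(w)$. Since $f$ is an embedding, this forces $x \leq_P w$, contradicting the assumption that $x$ is not below any element of $Q$.

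The main obstacle is simply checking that $f(Q)$ is analytic so that we can invoke the perfect set theorem for analytic sets; everything else transports verbatim from the proof in the previous subsection since the countable directedness of $Q$ does the work that $\PSP$-style choice did before, and we never need to Borel-uniformize anything (each existential quantifier over $Q$ is instantiated only finitely many times). Once $f(Q)$ is known to be analytic, the rest is routine.
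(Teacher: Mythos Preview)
Your proof is correct and follows essentially the same approach as the paper, which simply says the $\ZF+\PSP$ argument carries over once you replace $\PSP$ by the perfect set theorem for analytic sets (and notes that this is why $Q$ must be Borel). The only cosmetic difference is that you work directly with the image $f(Q)\subseteq\Cantor$, whereas the paper's $\ZF+\PSP$ argument phrases things in terms of Turing degrees; your version is the natural translation to the Borel setting where $f$ already lands in $\Cantor$.
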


\section{Countable Borel equivalence relations}
\label{sec:cber}

There is something a little odd about our results in the Borel setting. Namely, Theorems~\ref{thm:height2borel} and~\ref{thm:height3borel} are about whether or not certain Borel partial orders on the reals have Borel embeddings into Turing reducibility. But according to our definitions, Turing reducibility itself is not a Borel partial order. Instead, Turing reducibility (as a relation on $\Cantor$) is a Borel \emph{quasi-order}. This suggests that in the Borel setting, we should consider which Borel quasi-orders embed into the Turing degrees. And since the theory of locally countable Borel quasi-orders in some ways parallels the more well-studied theory of countable Borel equivalence relations, if we phrase our results in this way then it seems natural to compare them to what is known about countable Borel equivalence relations.

\subsection{Countable Borel equivalence relations.}
We will begin by reviewing a few definitions; for a more thorough introduction, see the recent survey by Kechris~\cite{kechris2021theory}. A \term{countable equivalence relation} is simply an equivalence relation whose equivalence classes are all countable. A \term{countable Borel equivalence relation} is a countable equivalence relation $(X, \sim_X)$ such that $X$ is a Borel subset of $\Cantor$ and $\sim_X$ is a Borel subset of $\Cantor\times\Cantor$. A \term{Borel reduction} from $(X, \sim_X)$ to $(Y, \sim_Y)$ is a Borel function $f\colon X \to Y$ such that $x \sim_X y \iff f(x) \sim_Y f(y)$. 

One of the main focuses of the theory of countable Borel equivalence relations is to determine which countable Borel equivalence relations are Borel reducible to each other. An important role in the theory is played by the \term{universal countable Borel equivalence relations}. Briefly, a countable Borel equivalence relation is universal if every other countable Borel equivalence relation is Borel reducible to it. Several countable Borel equivalence relations are known to be universal---for example, the orbit equivalence relation of the shift action of the free group on two generators~\cite{dougherty1994structure} and arithmetic equivalence~\cite{marks2016martins}. Kechris has conjectured that Turing equivalence is also universal~\cite{dougherty2000how}.

\begin{conjecture}[Kechris]
Turing equivalence is a universal countable Borel equivalence relation.
\end{conjecture}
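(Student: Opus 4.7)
The plan is to produce, given an arbitrary countable Borel equivalence relation $E$ on a standard Borel space $X$, a Borel function $f\colon X\to \Cantor$ with $x\mathrel{E}y \iff f(x)\equiv_T f(y)$. Since arithmetic equivalence is already known to be a universal CBER, it suffices to Borel-reduce arithmetic equivalence $\equiv_A$ to Turing equivalence $\equiv_T$. My first attempt would mirror the height-two strategy of Section~\ref{sec:height2}: pick a perfect tree $T$ with $[T]$ Turing independent, fix a Borel section of $\equiv_A$-classes into $[T]$ to serve as ``base points,'' and then define $f(x)$ to be a uniform generic upper bound coding the countable $\equiv_A$-class of $x$ over its base point, via Lemma~\ref{lemma:upperbound} and the Lusin--Novikov-based uniformization used in the proof of Theorem~\ref{thm:height2borel}. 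To promote this from an order-embedding to an equivalence reduction, the coding would need to be symmetric: $f(x)$ should be Turing-equivalent to the join of the base points in the class of $x$, so that $x\equiv_A y \Rightarrow f(x)\equiv_T f(y)$, while the coding reveals enough of the class that no $f(y)$ from a different class can reconstruct it.

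The main obstacle is exactly Theorems~\ref{thm:obstacle1} and~\ref{thm:obstacle2}. Any Borel reduction from an uncountable CBER produces an uncountable Borel image, which by the perfect set theorem for analytic sets contains a perfect subset $A$. Pick a countable dense $B\subseteq A$; each real in $B$ sits in some $\equiv_A$-class, and any $y$ that lies, combinatorially, above the entire classes of $B$ provides a real $f(y)$ that computes everything in $B$. Theorem~\ref{thm:obstacle2} then forces $(A\setminus B)\cup\{f(y)\}$ to fail Turing independence, meaning that some $f(z)$ with $z\not\equiv_A y$ becomes computable from $f(y)$ together with finitely many degrees in the image. For the reduction to respect $\not\equiv_T$ across distinct classes, this collapse has to be avoided, and it is not clear how to do so inside a naive perfect-image construction; in fact, the obstacle from this paper shows that it cannot be done by any construction that embeds the space of $\equiv_A$-classes as a perfect set of Turing independent reals.

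Consequently, the hard step is circumventing the perfect-set obstacle. One direction is to refuse to commit to a perfect image in the first place: engineer $f$ so that $f(X)$ is a Borel set whose intersection with any single $\equiv_T$-class has no perfect Turing-independent transversal---perhaps by interleaving the coding with joins chosen to spoil independence within classes while preserving it across classes, in the spirit of the large non-perfect Turing-independent sets of~\cite{kumar2023large}. A more pessimistic direction is to treat Theorem~\ref{thm:height3borel} as evidence that the conjecture fails, and to try to upgrade it into a proof that some universal CBER is not Borel-reducible to $\equiv_T$; this would presumably proceed by showing that a universal CBER encodes enough Borel partial-order structure to run into the height-three obstruction. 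Without a genuinely new technique on either side, I expect that any concrete proposal runs into precisely the tension between Borel uniformity and Turing independence that has kept the conjecture open.
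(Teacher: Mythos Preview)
The statement you are addressing is a \emph{conjecture}, not a theorem: the paper does not prove it, and indeed presents it as open and offers evidence \emph{against} it (Theorem~\ref{thm:height3borel} and the closing conjecture). There is therefore no ``paper's own proof'' to compare your proposal to.

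Your proposal is also not a proof. What you have written is a sketch of a natural first attempt (reduce via a perfect Turing-independent tree and generic upper bounds), followed by a correct diagnosis of why that attempt fails---namely, the perfect-set obstacle of Theorems~\ref{thm:obstacle1} and~\ref{thm:obstacle2}---and then two speculative directions (avoid perfect images \`a la~\cite{kumar2023large}, or try to refute the conjecture by pushing the height-three obstruction down to equivalence relations). That is a reasonable research outline, but it does not establish the conjecture, and you say as much in your final sentence. One additional gap worth flagging: even setting aside the perfect-set obstacle, your ``symmetric coding'' step is underspecified. Lemma~\ref{lemma:upperbound} produces an upper bound for a countable set that depends on a chosen enumeration; two enumerations of the same $\equiv_A$-class will in general yield Turing-inequivalent upper bounds, so you would need a Borel, enumeration-invariant way to pick a canonical Turing degree for each class---which is already tantamount to what you are trying to prove.
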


This conjecture is interesting both for its own sake and because it contradicts Martin's conjecture, a major open question in computability theory; see \cite{marks2016martins} for more about the connection between the two conjectures.

\subsection{Locally countable Borel quasi-orders.}
The theory of countable Borel equivalence relations has a natural analogue in the theory of locally countable Borel quasi-orders. A \term{quasi-order} is simply a transitive, reflexive binary relation (essentially a partial order where some elements are allowed to be equivalent to each other) and a quasi-order $(P, \leq_P)$ is \term{locally countable} if for every $x \in P$, the set $\{y \mid y \leq_P x\}$ is countable. 

In analogy with the definitions above, a \term{locally countable Borel quasi-order} is a locally countable quasi-order $(P, \leq_P)$ such that $P$ is a Borel subset of $\Cantor$ and $\leq_P$ is a Borel subset of $\Cantor\times\Cantor$. A \term{Borel reduction}\footnote{Note that earlier we spoke of \emph{Borel embeddings} whereas here we say \emph{Borel reductions}. In descriptive set theory (and in particular, in the theory of Borel equivalence relations and Borel quasi-orders) these have distinct meanings. However, they have the same meaning when the domain is a Borel partial order, which justifies our use of ``Borel embedding'' earlier.} from $(P, \leq_P)$ to $(Q, \leq_Q)$ is a Borel function $f\colon P \to Q$ such that $x \leq_P y \iff f(x) \leq_Q f(y)$ and a locally countable Borel quasi-order is \term{universal} if every other locally countable Borel quasi-order is Borel reducible to it.

There is a close connection between countable Borel equivalence relations and locally countable Borel quasi-orders. First, any equivalence relation $(X, \sim_X)$ literally is a quasi-order and if $\sim_X$ is countable as an equivalence relation then it is locally countable as a quasi-order. Second, given a quasi-order $(P, \leq_P)$, there is an associated equivalence relation $\sim_P$ on $P$, defined by
\[
    x \sim_P y \iff x\leq_P y \text{ and } y \leq_P x.
\]
If $\leq_P$ is locally countable then $\sim_P$ is countable and if $\leq_P$ is Borel then so is $\sim_P$. Moreover, universality of $\leq_P$ and $\sim_P$ usually go together: it is typically the case that if $\leq_P$ is universal among locally countable Borel quasi-orders then $\sim_P$ is universal among countable Borel equivalence relations, and vice-versa. For example, arithmetic reducibility is a univeral locally countable Borel quasi-order and arithmetic equivalence, its associated equivalence relation, is a universal countable Borel equivalence relation.

\subsection{Kechris's conjecture and quasi-orders.}
As we mentioned above, Turing reducibility, considered as a relation on $\Cantor$, is a locally countable Borel quasi-order. Also, its associated equivalence relation is just Turing equivalence. In light of this, and of the discussion above, there is a natural analogue of Kechris's conjecture for locally countable Borel quasi-orders---namely, the statement that Turing reducibility is a universal locally countable Borel quasi-order.


Since every locally countable Borel partial order is also a locally countable Borel quasi-order (and since a Borel reduction of a Borel partial order into a Borel quasi-order is automatically a Borel embedding), Theorem~\ref{thm:height3borel} shows that this statement is false (this was also proved by related means in~\cite{lutz2023part}). Since, as we have already mentioned, universality among locally countable Borel quasi-orders and among countable Borel equivalence relations seem to be strongly correlated, this theorem is evidence against Kechris's conjecture.

It is also possible to formulate a question that is partway between Kechris's conjecture for countable Borel equivalence relations and for locally countable Borel quasi-orders. Define the \term{height} of a quasi-order $(P, \leq_P)$ as the length of the longest strictly decreasing chain in $P$ (i.e.\ the height of the partial order formed by quotienting $P$ by $\sim_P$). Earlier we said that a countable Borel equivalence relation literally is a locally countable Borel quasi-order. Note that this quasi-order always has height one. Also note that if $(X, \sim_X)$ is a countable Borel equivalence relation then a Borel reduction of $\sim_X$ (as an equivalence relation) to Turing equivalence is not quite the same as a Borel reduction of $\sim_X$ (as a quasi-order) to Turing reducibility: the former just needs to send distinct $\sim_X$-equivalence classes to distinct Turing degrees whereas the latter needs to send them to \emph{incomparable} Turing degrees. Thus, one can view the statement that every locally countable Borel quasi-order of height one is Borel reducible to Turing reducibility as a mild strengthening of Kechris' conjecture.

In this paper, we have not just shown that Kechris's conjecture is false when countable Borel equivalence relations are replaced by locally countable Borel quasi-orders, we have also shown that the above statement is false when ``height one'' is replaced by ``height three.'' 

At this point it may seem that, when viewed from this perspective, the results of this paper actually support Kechris's conjecture. After all, Theorem~\ref{thm:height2borel} shows that every height two, locally countable Borel partial order is Borel reducible to Turing reducibility. Doesn't this suggest that the same may be true for quasi-orders? However, upon further consideration, this argument is not very convincing. In a quasi-order of height one, it is possible to have an infinitely long sequence of distinct elements which are all related to each other, but this is impossible in a partial order of finite height. The existence of such sequences seems to cause major problems for the construction we used in the proof of Theorem~\ref{thm:height2borel}. Thus we make the following conjecture.

\begin{conjecture}
There is a locally countable Borel quasi-order of height one which is not Borel reducible to Turing reducibility.
\end{conjecture}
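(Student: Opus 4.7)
The plan is to find a countable Borel equivalence relation $E$ on a standard Borel space $X$ which admits no Borel reduction to Turing reducibility as a quasi-order. Such a reduction $f\colon X \to \Cantor$ must satisfy $x \mathrel{E} y \iff f(x) \leq_T f(y)$, forcing $f$ both to reduce $E$ to Turing equivalence and to send distinct $E$-classes to \emph{incomparable} Turing degrees. It is this stronger second requirement that we hope to obstruct. Natural first candidates are $E_0$ (eventual equality on $\Cantor$) and Turing equivalence $\equiv_T$ itself, viewed as a countable Borel equivalence relation with uncountably many classes.

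The strategy is to combine the perfect set theorem for analytic sets with Theorem~\ref{thm:perfect}. Assuming such an $f$ exists, the image $f(X)$ is analytic and meets uncountably many Turing-equivalence classes, so it is uncountable and contains a perfect set $P$. Since Turing equivalence restricted to $P$ has only countable classes, a Mycielski-style refinement produces a perfect $P' \subseteq P$ of pairwise Turing-distinct reals; by the reduction property, $P'$ is then a perfect antichain in $\leq_T$. Choosing a countable dense $B \subseteq P'$ and, via a Borel uniformization, preimages $x_y \in f^{-1}(\{y\})$ for $y \in B$, set $b = \bigoplus_{y \in B} x_y$, so that $b$ computes every element of $B$. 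For any $c \in \Cantor$, Theorem~\ref{thm:perfect} then yields $d_1, d_2, d_3, d_4 \in P'$ with $b \oplus d_1 \oplus d_2 \oplus d_3 \oplus d_4 \geq_T c$.

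The crux is to exhibit a choice of $c$ and of $E$ for which this conclusion contradicts the reduction property. A natural target is $c = f(x^*)$ for some $x^*$ chosen so that $[x^*]_E$ is disjoint from every $E$-class meeting either the preimages of $B$ or the preimages of the $d_i$, so that $f(x^*)$ ought to be Turing-incomparable with each $d_i$ and with the $f$-images of the $x_y$. Overcoming this step appears to require either a measure- or category-theoretic argument controlling the Turing complexity of $b$ in terms of $f$---for instance, exploiting ergodicity of a Bernoulli shift so that the set of $x$ with $f(x) \leq_T b$ is null or meager---or a strengthening of Theorem~\ref{thm:perfect} that exploits the presence of infinite equivalence classes (which, as the authors note, is precisely the feature distinguishing the height-one case from the height-two case). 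Making this orthogonality argument precise is the heart of the conjecture, and is the main reason it is only stated as a conjecture rather than proved.
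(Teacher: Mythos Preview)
The statement you are addressing is a \emph{conjecture} in the paper, not a theorem; the paper gives no proof and does not claim one. So there is nothing to compare your argument to, and the relevant question is simply whether your proposal is a proof. It is not, and you yourself say so in the last paragraph.

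Two concrete problems with the sketch as written:

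\textbf{(1)} Setting $b = \bigoplus_{y \in B} x_y$ does not give a real that computes every element of $B$. It computes each preimage $x_y$, but $f$ is only Borel, so there is no reason $x_y$ should compute $f(x_y) = y$. To invoke Theorem~\ref{thm:perfect} you need $b$ to compute the elements of $B$ themselves; the natural choice is $b = \bigoplus_{y \in B} y$, and then the preimages $x_y$ play no role.

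\textbf{(2)} Even with that fix, the argument stalls exactly where you say it does. In the height-three proof, the contradiction comes from an element $w \in P_3$ whose image computes $b \oplus d_1 \oplus \cdots \oplus d_4$ but, by the order structure of $P$, should not compute $c$. In a height-one quasi-order there is no element playing the role of $w$ (or even of $x \in P_2$): the real $b$ is not in the range of $f$, and neither is any join of $b$ with the $d_i$. So the inequality $b \oplus d_1 \oplus \cdots \oplus d_4 \geq_T c$ carries no information about the quasi-order $E$, and there is nothing to contradict. Your proposed remedies (ergodicity, category, a strengthened Theorem~\ref{thm:perfect}) are directions, not arguments; none of them is developed enough to evaluate.

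In short: the paper states this as open, your outline identifies a reasonable line of attack and correctly isolates the obstruction, but nothing here closes the gap.
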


\bibliographystyle{alpha}
\bibliography{bibliography}

\end{document}